\newtheorem{theorem}{Theorem}[section]
\newtheorem{proposition}[theorem]{Proposition}
\newtheorem{lemma}[theorem]{Lemma}
\newcommand{\nbhd}{\ensuremath{\mathcal{N}}}
\newcommand{\bdry}{\ensuremath{\partial}}
\newcommand{\cut}{\ensuremath{\backslash}}
\newcommand{\nil}{\ensuremath{\emptyset}}
\newcommand{\R}{\ensuremath{{\mathbb R}}}
\newcommand{\calR}{\ensuremath{{\mathcal R}}}
\newcommand{\calS}{\ensuremath{{\mathcal S}}}
\newcommand{\MN}{\ensuremath{\mathcal{MN}}}
\newcommand{\LR}{\ensuremath{\mathcal{LR}}}
\newcommand{\HFK}{\ensuremath{{\widehat{\rm HFK}}}}
\DeclareMathOperator{\rank}{rk}
\newcommand*{\qed}{\null\nobreak\hfill\ensuremath{\square}}%
\title[The Morse-Novikov number of knots]{The Morse-Novikov number of knots under connected sum and cabling.}
\author{Kenneth L.\ Baker}
\begin{document}

\maketitle

\begin{abstract}
We show the Morse-Novikov number of knots in $S^3$ is additive under connected sum and unchanged by cabling.
\end{abstract}

%%%%%%%%%%%%%%%%%%%%   Start of main body of article
%---------------------------------------------------------------------
\section{Introduction}

Given an oriented link $L$ in $S^3$, the {\em Morse-Novikov number} of $L$ is the count $\MN(L)$ of the minimum number of critical points among regular Morse functions $f \colon S^3 - L \to S^1$, see \cite{PRW} and \cite[Definition 14.6.2]{Pajitnov-CMT}.   Pajitnov attributes to M.\ Boileau and C.\ Weber the question of whether the Morse-Novikov number is additive on the connected sum of oriented knots, see the beginning of  \cite[Section 5]{Pajitnov-tunnelMN} and the end of \cite[Section 14.6.2]{Pajitnov-CMT}.   We show that it is.

\begin{theorem}\label{thm:main}
	The Morse-Novikov number is additive:  If $K=K_a \# K_b$ is a connected sum of two oriented knots $K_a$ and $K_b$ in $S^3$, then
	\[\MN(K) = \MN(K_a) + \MN(K_b).\]
\end{theorem}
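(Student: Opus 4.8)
The plan is to prove the two inequalities separately, organizing both around the decomposing sphere $S$ of the connected sum. Since $S$ meets $K$ transversely in two points, $A = S \cap (S^3 - K)$ is a properly embedded separating annulus whose two boundary curves are meridians of $K$; it is essential because $S$ is essential and $S^3-K$ is irreducible, and its core is a meridian. The basic topological fact I would use is that cutting the exterior $E(K) = S^3 - \mathring N(K)$ along $A$ yields the disjoint union $E(K_a)\sqcup E(K_b)$ (equivalently, $E(K) = E(K_a)\cup_A E(K_b)$ is the standard presentation of a connected-sum exterior as a union of the two summand exteriors along a meridional annulus). In the language of circle-valued Morse functions, the goal then becomes to transfer a count-minimizing regular Morse function across this cut-and-reglue operation.

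For the inequality $\MN(K) \le \MN(K_a)+\MN(K_b)$ I would begin with regular Morse functions $f_a\colon E(K_a)\to S^1$ and $f_b\colon E(K_b)\to S^1$ realizing the two Morse--Novikov numbers. Near the meridional annulus along which $E(K_a)$ and $E(K_b)$ are to be glued, I would isotope each function into a standard nonsingular product form (this is possible because the meridian maps to $S^1$ with degree one, so a collar of it is foliated by level arcs with no tangencies). Gluing along $A$ then matches $f_a$ and $f_b$ into a single regular Morse function $f$ on $E(K)$ whose critical set is exactly the disjoint union of those of $f_a$ and $f_b$, which gives the desired bound.

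The content of the theorem is the reverse inequality. Here I would take a regular Morse function $f\colon E(K)\to S^1$ with exactly $\MN(K)$ critical points and attempt to isotope $A$ so that $f|_A$ is nonsingular, that is, so that $A$ is \emph{vertical}: foliated by arcs transverse to the level surfaces $f^{-1}(t)$ and running from one boundary meridian to the other. Granting this, cutting $E(K)$ along the now-vertical $A$ splits $f$ into regular Morse functions $f_a$ on $E(K_a)$ and $f_b$ on $E(K_b)$ with $\#\mathrm{crit}(f_a)+\#\mathrm{crit}(f_b)=\#\mathrm{crit}(f)=\MN(K)$, whence $\MN(K_a)+\MN(K_b)\le \MN(K)$.

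The hard part is making $A$ vertical without increasing the number of critical points. After putting $f|_A$ in generic position, its singularities are centers and saddles; since $\partial A$ is transverse to the level foliation and $\chi(A)=0$, a Poincar\'e--Hopf count forces the centers and saddles to occur in equal numbers, so one may hope to cancel them in pairs. Each cancellation amounts to isotoping a ``tongue'' of $A$ across a product region of $f$, and the difficulty is that such a tongue may be caught on a critical point of $f$. I expect to resolve this by an innermost-disk and complexity (thin-position) argument that exploits the incompressibility of $A$ and the irreducibility of $E(K)$ to reroute the tongue, at worst modifying $f$ in a neighborhood of $A$ in a way that creates no new critical points. Verifying that this procedure terminates, and that the two resulting functions are genuinely \emph{regular} Morse functions on the summand exteriors, is where I expect essentially all of the work to lie.
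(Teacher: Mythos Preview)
Your plan for the easy inequality is fine and matches the paper's. The gap is in the hard direction: you correctly isolate the crux as making the summing annulus vertical with respect to a minimizing $f$, but the innermost-disk / thin-position sketch you give does not close. Putting $f$ into thin position means passing to a \emph{generalized} circular splitting via weak reductions, and weak reductions can create handlebody pieces; when that happens the total handle count (equivalently, the number of critical points) can go up, not stay fixed. So the sentence ``at worst modifying $f$ in a neighborhood of $A$ in a way that creates no new critical points'' is precisely the unjustified step, and it is not clear any amount of rerouting tongues of $A$ will let you avoid this.

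The paper's proof negotiates exactly this obstacle. It works in the handle-theoretic translation $\MN=h$ and replaces the handle number by the \emph{handle index} $j$ (one-handles minus zero-handles), which \emph{is} invariant under weak reductions and amalgamations. Starting from a minimizing circular splitting (with an incompressible Seifert surface, via Lemma~\ref{lem:incompseifertsfce}, so that no handlebodies are present and $h=j$), one weak-reduces to a locally thin generalized splitting; there the nontrivial input is \cite[Proposition~5.1]{EMMGcircularthin}, which says the summing annulus can be isotoped to meet every compression body in product rectangles---your ``$A$ is vertical'' in this language. Cutting along $A$ then preserves $j$, and amalgamating each side back to an honest circular splitting (arranged to have no handlebodies) converts $j$ back into $h$. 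The bookkeeping through $j$ is what lets the argument pass through thin position without losing control of the count; your outline lacks an analogue of this, and without it the termination/no-new-critical-points claim is unsupported.
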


Instead of working with circle-valued Morse functions directly, we use the handle-theoretic interpretation of the Morse-Novikov number presented in \cite[\S3]{Goda} and \cite[\S2]{GP-Twisted}  that enables the use of techniques from the theory of Heegaard splittings.  This approach is rooted in Goda's work on handle numbers of sutured manifolds and Seifert surfaces \cite{Goda-SuturedManifold, Goda-Handlenumber} and  Manjarrez-Guti\'errez's work on generalized circular Heegaard splittings and circular thin decompositions \cite{MG-CTP}.  Notably, in \cite[Theorem 1.1]{MG-additivity} Manjarrez-Guti\'errez uses this approach to prove Theorem~\ref{thm:main} for the special class of {\em a-small} knots (i.e.\ knots with no closed essential surface disjoint from a Seifert surface) by using a key proposition about the positioning of the summing annulus for a connected sum with respect to a circular (locally) thin decompositions of the knot exterior which she established with Eudave-Mu\~noz \cite[Proposition~5.1]{EMMGcircularthin}.  In essence, we manage to avoid this a-small hypothesis in our proof of Theorem~\ref{thm:main} by paying closer attention to the behavior of counts of handles in generalized circular Heegaard splittings under the operations of weak reductions and amalgamations.

In preparation for \cite[Theorem 1.1]{MG-additivity}, Manjarrez-Guti\'errez shows that the handle number of an a-small knot is realized by the handle number of an incompressible Seifert surface, \cite[Theorem 4.3]{MG-additivity}.  On our way to Theorem~\ref{thm:main} we prove the analogous Lemma~\ref{lem:incompseifertsfce} below which removes the a-small hypothesis.  Our {\em handle number} definitions are given in Definition~\ref{def:handlenumber}.  

\begin{lemma}
\label{lem:incompseifertsfce}
A knot $K \subset S^3$ has an incompressible Seifert surface $R$ such that $h(K)=h(R)$.
\end{lemma}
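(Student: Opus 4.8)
The plan is to start from any Seifert surface realizing the handle number and to compress it repeatedly, checking at each stage that compression does not increase the handle number. Write $h(K)$ for the minimum of $h(R)$ over all Seifert surfaces $R$ of $K$ (Definition~\ref{def:handlenumber}), and choose $R$ with $h(R)=h(K)$. If $R$ is incompressible we are done, so suppose $R$ admits a compressing disk $D$ in the exterior of $K$, and let $R'$ be the surface obtained by compressing $R$ along $D$ (discarding any closed component, so that $R'$ is again a Seifert surface for $K$). Because $\partial D$ is essential on $R$, the complexity $-\chi(R')$ is strictly smaller than $-\chi(R)$. The entire lemma therefore reduces to the single claim that
\[ h(R') \le h(R), \]
since this forces $h(R')=h(K)$ by minimality, and an induction on $-\chi$ then terminates at an incompressible Seifert surface realizing $h(K)$.

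To prove the claim I would pass to the complementary sutured manifold $C = S^3 \cut \nbhd(R)$ and fix a handle decomposition of $C$ realizing $h(R)$, viewed as a relative (circular) Morse function building $C$ from one copy $R_-$ of $R$ up to the other copy $R_+$ using only $1$- and $2$-handles, following the handle-theoretic description in \cite[\S3]{Goda} and \cite[\S2]{GP-Twisted}. The compressing disk $D$ sits in $C$ with $\partial D$ essential on one of the two copies, say $R_+$. The key geometric observation is that cutting $R$ along $D$ and re-cutting $S^3$ along $R'$ alters $C$ only near $D$: the compression is realized by a surgery supported in a neighborhood of $D$, so that a handle decomposition of the complementary sutured manifold $C'$ of $R'$ can be assembled from the fixed decomposition of $C$ together with the disk $D$. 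After isotoping $D$ to be disjoint from the descending $1$-handles and to meet the splitting surface coherently, I would use $D$ as a $2$-handle that cancels against an existing $1$-handle of the decomposition, or is otherwise absorbed into the product region without cost, in either case yielding a handle decomposition of $C'$ with no more handles than that of $C$; hence $h(R')\le h(R)$.

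The main obstacle is exactly this handle-bookkeeping step: arranging $D$ so that it is simultaneously compatible with the given optimal decomposition of $C$ and with the re-cutting that defines $C'$, and then verifying that the resulting rearrangement never forces a new handle to appear. This is precisely where I expect to need the careful analysis of handle counts under weak reductions and amalgamations advertised in the introduction, in place of a thin-position argument that would require the a-small hypothesis. Two secondary points must also be dispatched. First, when $\partial D$ is separating on $R$ the compression splits off a closed component; since $\partial D$ is essential this component has positive genus, so the retained Seifert surface still has strictly larger Euler characteristic, and the discarded closed surface does not obstruct the handle decomposition of the exterior. Second, the induction is well founded because each compression strictly decreases $-\chi$ while keeping $R'$ a connected Seifert surface for $K$. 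Combining the claim with this induction produces an incompressible Seifert surface $R$ with $h(K)=h(R)$, as required.
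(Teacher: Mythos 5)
There is a genuine gap: your entire argument reduces to the claim $h(R') \le h(R)$ when $R'$ is obtained by compressing $R$ along a disk $D$, and that claim is never proved---you explicitly defer it (``the main obstacle is exactly this handle-bookkeeping step''), but it is not a bookkeeping step; it is the whole content of the lemma. The cancellation mechanism you sketch does not work as described. The complementary sutured manifold of $R'$ is not obtained from an optimal handle decomposition of $C = S^3 \cut \nbhd(R)$ by ``adding $D$ as a $2$--handle'': compressing $R$ along $D$ changes the complement on \emph{both} sides of $R$ (one side gains the product region $\nbhd(D)$, the other is cut along it, and a collar of $R$ is reattached along $R - \nbhd(\partial D)$), and there is no reason $D$ can be isotoped to cancel against a $1$--handle of a decomposition of $C$ that was chosen before $D$ was taken into account. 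Nothing in the weak-reduction/amalgamation machinery you invoke supplies this either, and that is precisely why the paper does not argue this way.

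The paper's route is structurally different and avoids your claim entirely. Given a splitting $(M,R,S)$ realizing $h(R)=h(K)$ with $R$ compressible, compressibility of $R$ forces the splitting to be \emph{weakly reducible} (Lemma~\ref{lem:bdrycompisWR}, via Casson--Gordon); a \emph{maximal weak reduction} then compresses the thick surface $S$ --- not $R$ --- producing a new incompressible thin surface $R_2$ whose unique component with boundary, $R_2^0$, is an incompressible Seifert surface. After amalgamating away $R$ and the closed components of $R_2$ (Definition~\ref{defn:amalg}), one gets a circular splitting $(M,R_2^0,S'')$ with no handlebodies, and since weak reduction and amalgamation preserve the handle index (Lemma~\ref{lem:equivalentexchange}), hence the handle number here (Lemma~\ref{lem:handlenumber}), $h(R_2^0)=h(R)=h(K)$. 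Note that $R_2^0$ arises from compressing $S$ and is in general \emph{not} a compression of $R$, so the paper's proof gives no information about your surface $R'$; in particular it neither proves nor uses the monotonicity $h(R')\le h(R)$, and no induction on $-\chi$ is needed. To salvage your approach you would have to establish that monotonicity statement independently, and it is unclear that it is even true for an arbitrary (or even a well-chosen) compressing disk.
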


Finally, we observe that \cite[Proposition 5.1]{EMMGcircularthin} directly generalizes to address cabling annuli.  Consequently, our proof of Theorem~\ref{thm:main} adapts to show that handle number is unaffected by cabling.
\begin{theorem}
	\label{thm:cable}
	Let $K_{p,q}$ be the $(p,q)$--cable of the knot $K$ for coprime integers $p,q$ with $p>0$.   Then $h(K_{p,q})=h(K)$ and hence $\MN(K_{p,q})=\MN(K)$.
	\end{theorem}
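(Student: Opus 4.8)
The plan is to reduce the theorem to the handle-number equality $h(K_{p,q})=h(K)$, from which $\MN(K_{p,q})=\MN(K)$ follows through the handle-theoretic reformulation of the Morse--Novikov number recorded in Definition~\ref{def:handlenumber}. The geometric picture I would use is the standard one: the torus $T=\partial N(K)$ splits $E(K_{p,q})$ into the companion exterior $E(K)$ and the cable space $C_{p,q}=N(K)-N(K_{p,q})$, while the cabling annulus $A\subset C_{p,q}$---the essential annulus with $\partial A\subset\partial N(K_{p,q})$ whose core is a copy of $K_{p,q}$---cuts $E(K_{p,q})$ into $E(K)$ and a solid torus. Two features drive the argument. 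First, a Seifert surface $R_{p,q}$ of $K_{p,q}$ meets $E(K)$ in $p$ parallel copies of a Seifert surface of $K$, so the Seifert class of $K_{p,q}$ restricts to $E(K)$ as $p\,\phi_K$ with $\phi_K$ primitive; and the handle number is insensitive to this factor, since $h(E(K);\phi_K)=h(E(K);p\,\phi_K)$. Indeed, on the infinite cyclic cover determined by $\ker\phi_K=\ker(p\phi_K)$ with deck generator $\tau$, class-$\phi_K$ and class-$p\phi_K$ circular Morse functions lift to $\R$-valued functions satisfying $\tilde f\circ\tau=\tilde f+1$ and $\tilde g\circ\tau=\tilde g+p$ respectively, and the inverse rescalings $\tilde f\mapsto p\tilde f$, $\tilde g\mapsto p^{-1}\tilde g$ match these families without changing critical points or their indices. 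Second, $C_{p,q}$ is fibered in the relevant class, so as a circular piece it carries no handles.

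For the upper bound $h(K_{p,q})\le h(K)$, I would take an incompressible Seifert surface realizing $h(K)$ via Lemma~\ref{lem:incompseifertsfce} and an accompanying circular handle decomposition of $E(K)$ in class $\phi_K$ with $h(K)$ handles. Rescaling to class $p\phi_K$ costs nothing by the observation above, and the fibration of $C_{p,q}$ matches it along $T$---both restrict there to $p$ parallel longitudes. Gluing the two along $T$ then yields a circular handle decomposition of $E(K_{p,q})$ with exactly $h(K)$ handles.

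For the lower bound $h(K)\le h(K_{p,q})$, I would adapt the proof of Theorem~\ref{thm:main}, substituting the cabling annulus for the summing annulus. Beginning with a circular thin decomposition of $E(K_{p,q})$ realizing $h(K_{p,q})$, I would invoke the generalization of \cite[Proposition~5.1]{EMMGcircularthin} to cabling annuli to isotope $A$ into a position compatible with the thick and thin levels, and then cut along it. This splits the decomposition into a generalized circular Heegaard splitting of $E(K)$ in class $p\phi_K$ and one of the complementary solid torus; the latter amalgamates away to zero handles, and the accounting of handle counts under weak reductions and amalgamations---exactly as in the connected-sum case---shows that the $E(K)$-side realizes at most $h(K_{p,q})$ handles. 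With the rescaling invariance this gives $h(K)=h(E(K);p\phi_K)\le h(K_{p,q})$, and hence equality.

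I expect the main obstacle to be the lower bound: confirming that \cite[Proposition~5.1]{EMMGcircularthin} genuinely generalizes to cabling annuli and that the handle count is controlled across the cut. The cabling annulus meets $\partial N(K_{p,q})$ in the cabling slope rather than in a meridian, so its positioning relative to a thin decomposition must be re-established for this boundary behavior; and, as in the connected-sum argument, the delicate point is to bound the handles created or destroyed under the weak reductions and amalgamations used to separate the splitting. By contrast, the complementary piece here is merely a solid torus, and the apparent factor of $p$ is dispatched by the rescaling observation, so the essential difficulty is exactly the annulus-positioning and handle-accounting inherited from the proof of Theorem~\ref{thm:main}.
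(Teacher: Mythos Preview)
Your proposal is correct and follows essentially the same route as the paper: the upper bound comes from the fiberedness of the cable pattern (packaged in the paper as Lemmas~\ref{lem:fiberedpattern}, \ref{lem:satellite}, and~\ref{lem:fiberedcable}), and the lower bound passes to a locally thin splitting, invokes the extension of \cite[Proposition~5.1]{EMMGcircularthin} to the cabling annulus (recorded as Theorem~\ref{thm:annulus}, applicable because for $p\geq 2$ the cabling slope $pq$ differs from the longitudinal Seifert slope), cuts, and tracks the handle index through amalgamations on each piece.

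The one point of divergence is your treatment of the factor of $p$: you introduce a Morse-theoretic rescaling to identify $h(E(K);\phi_K)$ with $h(E(K);p\phi_K)$. The paper never needs this. After cutting along the cabling annulus, the induced generalized splitting $(M_K,\calR_K,\calS_K)$ is simply amalgamated all the way down to a circular Heegaard splitting $(M_K,R_K,S_K)$ with a \emph{single} thin Seifert surface, and since no handlebodies appear the handle index equals the handle number throughout (Lemma~\ref{lem:handlenumber}). Your rescaling is correct and is essentially the Morse-theoretic shadow of that amalgamation, but the paper's phrasing stays entirely within the handle-index bookkeeping already established for Theorem~\ref{thm:main} and avoids introducing handle numbers relative to a cohomology class.
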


For the ease of exposition, we content ourselves with focusing upon knots in $S^3$.  However,  Lemma~\ref{lem:incompseifertsfce}, Theorem~\ref{thm:main}, and Theorem~\ref{thm:cable} can be immediately generalized to null-homologous knots in rational homology spheres.   With a little more work they should also generalize to rationally null-homologous knots in other orientable $3$--manifolds.  

%---------------------------------------------------------------------
\subsection{Proof sketches}
Let us quickly sketch the proofs of Lemma~\ref{lem:incompseifertsfce} and Theorem~\ref{thm:main} for readers already familiar with the notions of (circular) generalized Heegaard splittings.   Their full proofs are given in \S\ref{sec:proofs}. As the proof of Theorem~\ref{thm:cable} is quite similar to that of Theorem~\ref{thm:main}, we wait to address it in \S\ref{sec:cables} after setting up some groundwork for satellites in \S\ref{sec:satellite}.
Prior to the two sketches, there are a few things worth clarifying now which we will address more fully in  \S\ref{sec:prelim}.  

 We allow our compression bodies to have vertical boundary so that they may be regarded as a kind of  sutured manifold without toroidal sutures.  For us, a generalized Heegaard splitting consists of a sutured manifold $M$ (with positive and negative subsurfaces $R_+$ and $R_-$ of $\bdry M$, with annular sutures between their boundaries, and possibly with toroidal sutures) and a disjoint pair of properly embedded (possibly disconnected) ``thin'' surface $\calR$ and ``thick'' surface $\calS$ that decompose $M$ into compression bodies.   The  positive boundaries of connected compression bodies are components of $\calS$, while the negative boundaries are unions of components of $\calR \cup R_+ \cup R_-$ (satisfying some conditions on $R_+$ and $R_-$).  We also frequently suppress the term circular as it is implied when $M$ has a toroidal suture.

The {\em handle number} $h(W)$ of a compression body $W$ is the minimum number of $0$-- and $1$-- handles used in its construction.
We define its handle index $j(W)$  to be the number of $1$--handles minus the number of $0$--handles used in its construction.  (Dually, it is the number of $2$--handles minus the number of $3$--handles.)  This turns out to be half of the handle index $J$ introduced by Scharlemann-Schultens \cite{SS-annuli,SS-comparing}.   Importantly, the handle index $j$ agrees with the handle number $h$ when the compression body has no handlebody component.  We extend both the handle index and handle number to generalized Heegaard splittings by summing over the compression bodies in the decomposition. (Our use of handle number differs from Goda's at this point.)   The driving observation is that, because weak reductions and amalgamations of generalized Heegaard splittings neither introduce nor cancel $1$-- \& $2$--handle pairs, the handle index is unchanged by these operations.

\begin{proof}[sketch of Lemma~\ref{lem:incompseifertsfce}]
Consider a circular Heegaard splitting $(M,R,S)$ of the exterior $M$ of the knot $K$ with compressible Seifert surface $R$ where $h(K)=h(R)=h(M,R,S)$.  Since the splitting contains no handlebodies, $h(M,R,S)=j(M,R,S)$. Since $R$ is compressible, the splitting is weakly reducible.  A maximal weak reduction along $S$ produces a generalized Heegaard splitting $(M, R_1 \cup R_2, S_1 \cup S_2)$ where $R_1 = R$ and $R_2$ is a potentially disconnected surface that contains an incompressible Seifert surface $R_2^0$.  Then amalgamations along $R \cup R_2 - R_2^0$ produce a circular Heegaard splitting $(M, R_2^0, S'')$ without handlebodies.  Thus $j(M,R,S) = j(M, R_2^0,S'') = h(M,R_2^0,S'') \geq h(R_2^0) \geq h(K)$.  Hence $h(R_2^0) = h(K)$.  See Figure~\ref{fig:incompseifertsfce} for a schematic of the weak reduction and amalgamations.
\end{proof}

\begin{proof}[sketch of Theorem~\ref{thm:main}]
We show $h(K_a \# K_b) \geq h(K_a) + h(K_b)$ as the other inequality is straightforward.   Let $M$ be the exterior of $K$ and let $Q$ be the annulus that decomposes $M$ into the exteriors $M_a$ and $M_b$ of the constituent knots $K_a$ and $K_b$.   

By Lemma~\ref{lem:incompseifertsfce}, there is an incompressible Seifert surface $R$ for $K=K_a\#K_b$ such that $h(K)=h(R)$.  Hence there is a circular Heegaard splitting $(M,R,S)$ realizing this handle number.  A maximal iterated weak reduction of this splitting yields a locally thin generalized Heegaard splitting $(M, \calR, \calS)$ in which $R$ is a component of $\calR$. Thus 
\[h(K) = h(R) = h(M,R,S) = j(M,R,S) = j(M, \calR, \calS).\]
By \cite[Proposition~5.1]{EMMGcircularthin}, since $(M, \calR, \calS)$ is locally thin, we may isotop $Q$ so that any intersection with a compression body of the splitting is a product disk.  Then $Q$ chops this splitting of $M$ into splittings $(M_i, \calR_i, \calS_i)$ of $M_i$ for $i=a,b$.  Furthermore
\[j(M, \calR, \calS) = j(M_a, \calR_a, \calS_a) + j(M_b, \calR_b, \calS_b).\]
For each $i=a,b$, a sequence of amalgamations of $(M_i, \calR_i, \calS_i)$ produces a circular Heegaard splitting $(M_i, R_i, S_i)$ with no handlebodies where $R_i$ is a Seifert surface for $K_i$.  Hence
\[j(M_i, \calR_i, \calS_i) = j(M_i, R_i, S_i) = h(M_i, R_i, S_i) \geq h(R_i) \geq h(K_i)\]
and the result follows.
\end{proof}

%---------------------------------------------------------------------
\subsection{Questions}
Lemma~\ref{lem:incompseifertsfce} prompts the following question.

\begin{question}
	Is the handle number of a knot always realized by a minimal genus Seifert surface?
\end{question}

While Goda shows this is so for all small crossing knots, he also reveals the challenge with addressing this question:  
there are knots with multiple minimal genus Seifert surfaces that do not all realize the handle number of the knot. See \cite{Goda-Handlenumber}.

\bigskip
\begin{center}
	{---\quad$\bullet$\quad---}
\end{center}
\bigskip

One may also wonder whether Theorem~\ref{thm:main} might be proven more simply, albeit possibly more indirectly, by expressing the Morse-Novikov number in terms of other established knot invariants.  
\begin{question}\label{ques:other}
Can the Morse-Novikov number of a knot be expressed in terms of other established knot invariants? 
\end{question}

While additivity under connected sum is not an uncommon property among knot invariants, the detection of fiberedness is rather exceptional.  For example, while the log of the leading coefficient of the  Alexander polynomial is additive under connected sums of knots, it fails to detect fibered knots. (Indeed, the Alexander polynomial is multiplicative under connected sums and many non-fibered knots have monic Alexander polynomials.) Nevertheless, the log of the rank of the knot Floer Homology of a knot $K$ in the highest non-zero grading, $\LR(K) = \log \rank{\HFK(K,g(K))}$, is both additive on connected sums \cite[Theorem~7.1]{OzSz-HolDiskKnotInvt} and equals zero precisely for fibered knots \cite{Ghiggini-fibered, Ni-fibered}.  However, $\LR$ is distinct from $\MN$; neither is a function of the other.  For alternating knots, $\rank{\HFK(K,g(K))}$ equals the coefficient of the maximal degree term of the Alexander polynomial \cite{OzSz-Alternating}. So $\LR$ varies greatly among the small crossing knots.  However all non-fibered knots of crossing number at most 10 have $\MN = 2$ \cite[\S7]{Goda-Handlenumber}.  Furthermore, the non-fibered twist knots all have $\MN=2$ via \cite{Goda-SuturedManifold} while $\LR$ increases with their twisting (as can be calculated from their Alexander polynomials since they are alternating knots).  

\begin{question}
What knot invariants detect fibered knots and are additive under connected sum?  Which are also unaffected by cabling?
\end{question}

See \cite{taylortomova} for related work on knot invariants that detect the unknot and are additive under connected sum.
Note that $\MN$ does not detect the unknot as it is $0$ for all fibered knots.

\medskip
\begin{center}
	{---\quad$\bullet$\quad---}
\end{center}
\bigskip

As a connected sums and cables are special kinds of satellite knots, one may wonder how handle number behaves under more general satellite operations.   We refer the reader to \S\ref{sec:satellite} for the relevant definitions. 

\begin{question}\label{ques:satellite}
	Let $P$ be a pattern with non-zero winding number, and let $K$ be a knot in $S^3$.  
	Does $h(P(K)) = h(P) + h(K)$?
\end{question}

In Lemma~\ref{lem:satellite} we show $h(P(K)) \leq h(P)+h(K)$.   Part of the difficulty in establishing the equality in general is dealing with how an essential torus may intersect a (locally thin circular generalized) Heegaard splitting. For starters, see \cite{SS-annuli} and \cite{thompson-toriandheegaardsplittings}.  We suspect Question~\ref{ques:satellite} has a negative answer for certain knots and patterns.

Note that in \S\ref{sec:satellite} we only define handle numbers for patterns with non-zero winding number. Indeed, our definitions of $h(P)$ and $h(K)$ are probably too restrictive for a satisfactory understanding of $h(P(K))$ and Question~\ref{ques:satellite} in general without constraints on winding number.

%---------------------------------------------------------------------
%\subsection{Acknowledgements}
%We thank Andrei Pajitnov for introducing this problem to us during his visit to the University of Miami in Spring 2019.  We  thank  Jennifer Schultens for her correspondence and both Nikolai Saveliev and Chris Scaduto for prompting Question~\ref{ques:other} and the comparison with knot Floer homology.  We also thank Scott Taylor for conversations and valuable commentary on an earlier draft.  This work was partially supported by a grant from the Simons Foundation (\#523883 to Kenneth L.\ Baker).

%---------------------------------------------------------------------
\section{Compression bodies and Heegaard splittings}\label{sec:prelim}
Here we recall fundamental notions in the theory of Heegaard splittings and its generalizations.  As there is a bit of variation in the literature, this allows us to also establish our terminology and notation. We refer the reader to \cite{CG-RHS, Goda-Handlenumber, Goda-SuturedManifold, Goda} for the basic elements of our approach to compression bodies and Heegaard splittings and to \cite{ST-TP3M,Scharlemann-HSo3M,SSS-LectureNotes,MG-CTP,EMMGcircularthin} for the core ideas of generalized Heegaard splittings, circular Heegaard splittings, and  circular generalized Heegaard splittings.  For our purposes in this article, we take care to clarify the operations of weak reductions and amalgamations and refer the reader to \cite{schultens-SfcexS1,SchultensGraph,MG-additivity, lackenby-HGalg} for further discussions of these operations.  

There are a few things to keep in mind when reading the literature.  Some authors require compression bodies to be connected.  We allow them to be disconnected in general but primarily give attention to connected compression bodies.  Some authors view compression bodies as cobordisms between closed surfaces.  We however need them to be cobordisms rel-$\bdry$ between surfaces that may have boundary.  To a great extent these distinctions don't impact the results, though when they do the appropriate modification is typically straightforward.	

Thoughout we will restrict ourselves to only considering irreducible manifolds so that any embedded sphere bounds a $3$--ball. For notation, the result $Y \cut X$ of chopping a manifold $Y$ along a submanifold $X$ is the closure of $Y-X$ in the path metric.  

%---------------------------------------------------------------------
\subsection{Compression bodies} 

\begin{definition}[{\em Sutured manifolds}]
Introduced by Gabai \cite{Gabai-SM}, a {\em sutured manifold} is a compact oriented $3$--manifold $M$ together with a disjoint pair of subsurfaces $R_+$ and $R_-$ of $\bdry M$  such that $\bdry M \cut (R_+ \cup R_-)$ is a collection of annuli and tori where these annuli join $\bdry R_+$ to $\bdry R_-$.  These complementary annuli and tori are called the {\em sutures}.  The orientation on $R_+$ is taken consistent with the boundary orientation of $\bdry M$ while $R_-$ is oriented oppositely.  We will further assume throughout that no component of $R_+$ or $R_-$ is a sphere.  
\end{definition}

\begin{definition}[{\em Compression bodies}]
	Following \cite{CG-RHS}, a {\em compression body} $W$ is a cobordism rel-$\bdry$ between orientable surfaces $\bdry_+ W$ and $\bdry_- W$ that may be formed as follows: there is a non-empty compact orientable surface $S$ (without sphere components) 
	such that $W$ is assembled from the product $S \times[-1,+1]$ by attaching $2$--handles  to $S \times \{-1\}$ and then $3$--handles to any resulting sphere boundary components meeting $S \times \{-1\}$ that are disjoint from $\bdry S \times [-1,+1]$.  The collection of annuli $\bdry_v W = \bdry S \times [-1,+1]$ is the {\em vertical boundary}  of $W$, and the complement in $\bdry W$ of its interior are the surfaces $\bdry_+ W = S \times \{+1\}$ and $\bdry_- W$. 
	Note that
	\begin{enumerate}
		\item a compression body need not be connected,
		\item $\bdry_+ W$ is necessarily non-empty, and
		\item $\bdry_- W$ has no sphere components. 
	\end{enumerate}
	
	Dually, we may view a compression body $W$ as formed as follows: there is an orientable surface $F$ (without sphere components, and possibly $F = \nil$) such that $W$ is assembled from the disjoint union of the product $F \times [-1,+1]$ and a collection of $0$--handles by attaching a collection of $1$--handles to $F \times \{+1\}$ and the boundaries of these $0$--handles. (Furthermore, every $0$--handle has a $1$--handle attached to it.)  Here $\bdry_v W = \bdry F \times [-1,+1]$, and the complement in $\bdry W$ of its interior are the surfaces $\bdry_-W = F\times \{-1\}$, and $\bdry_+W$.  
	If $W$ is connected, then the components of $F \times [-1,+1]$ and the $0$--handles are all joined by a sequence of $1$--handles.
	Duality exchanges the $0$-- and $3$--handles and the $1$-- and $2$--handles.

 A {\em trivial} compression body $W$ is a product $W = S \times [-1,+1]$ so that $\bdry_+ W = S \times \{+1\}$. 
 A connected compression body $W$ with $\bdry_- W = \nil$ is a {\em handlebody}. 	
\end{definition}

\begin{remark}
For comparison, Bonahon introduced compression bodies $W$ for which $\bdry_+W$ is a closed and possibly disconnected surface.  Furthermore he allows $\bdry_+W$ to have a sphere component as long as it bounds a ball. See \cite{bonahon-cobordism}.
\end{remark}

\begin{remark}
A compression body $W$ may be regarded as a special kind of sutured manifold where $R_+=\bdry_+ W$ and $R_-=\bdry_- W$, and the annuli $\bdry_v W$ are the sutures.  
Furthermore, if $W$ is a connected compression body with  $\bdry W$ connected, then $W$ is also a handlebody if we forget the sutured manifold structure (that is, if we set $\bdry_+ W = \bdry W$).  Throughout, our compression bodies will retain their sutured manifold structure.
\end{remark}

%---------------------------------------------------------------------
\subsection{Heegaard splittings}

\begin{definition}[{\em Heegaard splittings}]\label{def:HS}
Let $M$ be a connected sutured manifold without toroidal sutures.   
A {\em Heegaard splitting} of $M$, denoted $(M,S)$, is a decomposition of $M$ along an oriented properly embedded surface $S$ into a pair of non-empty connected compression bodies $A$ and $B$ so that $M = A \cup B$ where $S = A \cap B = \bdry_+ A = \bdry_+ B$,  $R_- = \bdry_- A$, and $R_+ = \bdry_- B$.  (Here we flip the orientation of $B$ as a sutured manifold.)  The surface $S = A \cap B$ is the {\em Heegaard surface} of the splitting, and this surface defines the splitting.  For notational convenience we also refer to the Heegaard splitting as the triple $(A,B; S)$. Observe that the boundary components of $S$ may be regarded as the core curves of the annular sutures of $M$.      Further note that $S$ is necessarily connected.

A Heegaard splitting $(A, B;S)$ is {\em trivial} if either $A$ or $B$ is a trivial compression body.

Any connected sutured manifold $M$ without toroidal sutures has a Heegaard splitting as long as neither $R_+$ nor $R_-$ has a sphere component, cf. \cite{CG-RHS, Goda-SuturedManifold}.\end{definition}

\begin{definition}[{\em Circular Heegaard splitting}]
Let $M$ be a connected sutured manifold, possibly with toroidal sutures. 
A {\em circular Heegaard splitting} of $M$, denoted $(M, R, S)$, is a decomposition along a pair of disjoint, properly embedded, oriented surfaces $R$ and $S$ so that:
\begin{itemize}
\item $\bdry R$ is contained in the sutures of $M$,
\item $M \cut R$ is a connected sutured manifold without toroidal sutures, and 
\item $S$ is a Heegaard surface for $M\cut R$.
\end{itemize}
\end{definition}

\begin{definition}[{\em Generalized Heegaard splittings}]\label{defn:ghs}
	Let $M$ be a sutured manifold, possibly with toroidal sutures; $M$ need not be connected. %added this disconnectedness comment in V2.
A {\em generalized Heegaard splitting} of $M$, denoted $(M, \calR, \calS)$, is a decomposition of $M$ along a pair of disjoint, properly embedded, oriented surfaces $\calR$ and $\calS$  satisfying the following conditions: 
	\begin{itemize}
	\item $\bdry \calR$ is in the sutures of $M$;
	\item $\calR$  decomposes $M$ into a collection $M \cut \calR$ of connected sutured submanifolds without toroidal sutures; 
	\item  $\calS$ intersects each component of $M \cut \calR$ in a single Heegaard surface; and
	\item the compression bodies of $M \cut (\calR \cup \calS)$ may be labeled $A$ or $B$ 
	%(mnemonically colored either {\em Album}\footnote{Latin neuter for {\em White}} or {\em Black}) 
	so that each component of $\calR \cup \calS$ meets both a $A$ compression body and a $B$ compression body.
	\end{itemize}
The surfaces $\calR$ and $\calS$ are commonly called ``thin'' and ``thick'' respectively, as are their components.
	Note that each component of  $\calR$ is both a component of $R_+(M_i)$ and a component of $R_-(M_j)$ for some components $M_i, M_j$ of $M \cut \calR$ (allowing $M_i=M_j$).  We may choose the labeling of the compression bodies of $M \cut (\calR \cup \calS)$ so that for any component $M_i$ of $M \cut \calR$, the $A$ compression body of its splitting by $\calS$ meets $R_-(M_i)$ and the $B$ compression body meets $R_+(M_i)$. 

%Observe that if $\calR=\nil$, then the generalized Heegaard splitting $(M, \nil, \calS)$ is just a Heegaard splitting of $M$.  Indeed, when $\calR=\nil$ then $M \cut \calR=M$ so that $M$ necessarily has no toroidal sutures and $\calS$ is just a Heegaard surface for $M$.  %Note V1 should have imposed that $M$ was connected.
Observe that if $\calR=\nil$, then the generalized Heegaard splitting $(M, \nil, \calS)$ may be regarded as a Heegaard splitting of $M$ even if $M$ is disconnected.  Indeed, when $\calR=\nil$ then $M \cut \calR=M$ so that $M$ necessarily has no toroidal sutures and $\calS$ intersects each connected component of $M$ in a connected Heegaard surface as in Definition~\ref{def:HS}.  As such, the surface $\calS$ divides $M$ into compression bodies $A$ and $B$ that each have exactly one connected component for each component of $M$. As with Heegaard splittings of connected manifolds, we may also refer to this splitting as the pair $(M, \calS)$ or the triple $(A, B;\calS)$. %Updated in V2 to discuss Heegaard splittings of disconnected manifolds

The adjective {\em generalized} indicates that $\calR$ may split $M$ into possibly more than one connected sutured submanifold. 
\end{definition}

\begin{remark}
Often in the literature, the term ``generalized Heegaard splitting'' allows for disconnected compression bodies in Heegaard splittings of potentially disconnected manifolds.  In these contexts, typically the thin and thick surfaces $\calR$ and $\calS$ are partitioned as regular levels of a Morse function of $M$ to either $\R$ or $S^1$  appearing alternately.   Between a consecutive pair of regular thin  levels of $\calR$ is a (possibly disconnected) manifold for which the intermediate regular thick level of $\calS$ restricts to a Heegaard surface on each component.    This Morse-theoretic version of generalized Heegaard splittings may be recovered from our kind of generalized Heegaard splitting by inserting pairs of trivial compression bodies as needed and taking their unions.   One may add the adjective {\em linear} or {\em circular} to the term ``generalized Heegaard splitting''  to emphasize that the splitting is associated to a Morse function to $\R$ or $S^1$ respectively.  Note that it is possible for the same generalized Heegaard splitting to be regarded as both linear and circular.  

Our approach to generalized Heegaard splittings falls more in line with that of \cite[Definition 4.1.7]{SSS-LectureNotes},  though we do not require exactness of the associated fork complex (see \cite[Definitions 4.1.3 \& 4.1.4]{SSS-LectureNotes}).  Furthermore,  we allow our compression bodies to have vertical boundary.
\end{remark}

%---------------------------------------------------------------------
\subsection{Weak Reductions and Amalgamations}
Explicit descriptions of weak reductions and amalgamations are given in \cite[\S2]{schultens-SfcexS1} in the context of compression bodies without vertical boundary. We present them here slightly differently while allowing vertical boundary, but the idea is basically the same.

\begin{definition}[{\em Stabilized splittings}]
Let $M$ be a connected sutured manifold without toroidal sutures.   Then both the Heegaard splitting $(A,B;S)$ and the Heegaard surface $S$ are called {\em stabilized} if $S$ has compressing disks $D_A$ in $A$ and $D_B$ in $B$ such that $\bdry D_A$ and $\bdry D_B$ are transverse in $S$ and intersect exactly once.  In such a situation, a compression of $S$ along $D_A$ (or $D_B$) produces a new Heegaard surface $S'$ of genus one less (allowing for the splitting of $S^3$ into two balls).  The reduction from $S$ to $S'$ is called a {\em destabilization}.  The inverse process (of tubing a Heegaard surface along a boundary parallel arc) is called a {\em stabilization}.
\end{definition}

\begin{definition}[{\em Reducible and irreducible splittings}]
Let $M$ be a connected sutured manifold without toroidal sutures.   Then both the Heegaard splitting $(A,B;S)$ and the Heegaard surface $S$ are called {\em reducible} if $S$ has compressing disks $D_A$ in $A$ and $D_B$ in $B$ such that $\bdry D_A=\bdry D_B$.  They are {\em irreducible} if they are not reducible.

Note that a stabilized splitting is reducible.    By Waldhausen \cite{waldhausen}, a reducible splitting of an irreducible manifold is stabilized.
\end{definition}

\begin{definition}[{\em Weakly reducible \& strongly irreducible}]
\label{defn:wr}
Let $M$ be a connected sutured manifold without toroidal sutures.   Then both the Heegaard splitting $(A,B;S)$ and the Heegaard surface $S$ are called {\em weakly reducible} if $S$ has compressing disks $D_A$ in $A$ and $D_B$ in $B$ that are disjoint.   They are called {\em strongly irreducible} otherwise.  That is, a Heegaard splitting $(A,B;S)$ is strongly irreducible if every compressing disk of $S$ in $A$ intersects every compressing disk of $S$ in $B$. 

If some component of $\calS$ of a generalized Heegaard splitting $(M, \calR, \calS)$  is a weakly reducible Heegaard surface for its component of $M \cut \calR$, then the generalized Heegaard splitting is {\em weakly reducible}. Otherwise, the generalized Heegaard splitting is {\em strongly irreducible}.  (In \cite[Definition 2.9]{SS-annuli}, it is further required that a strongly irreducible generalized Heegaard splitting has $\calR$ incompressible.  We call such a splitting locally thin, see Definition~\ref{defn:locallythin}.) 
\end{definition}

\begin{remark}
Vacuously, a trivial Heegaard splitting is strongly irreducible by definition.  In contrast, where the term was first introduced at the beginning of \cite[\S2]{CG-RHS}, Casson and Gordon required that a strongly irreducible splitting be also non-trivial.  Nevertheless, many authors define the term as we do (e.g.\  \cite[Definition 3.9]{Scharlemann-HSo3M}, \cite[Definition 3.1.14]{SSS-LectureNotes}).
At issue is how one treats the trivial splitting of a non-trivial compression body.  
\end{remark}

\begin{lemma}\label{lem:bdrycompisWR}
Let $M$ be a connected sutured manifold without toroidal sutures.  If $R_+ \cup R_-$ is compressible, then any non-trivial Heegaard splitting of $M$ is weakly reducible.  Consequently, if $(M, \calR, \calS)$ is a generalized Heegaard splitting in which $\calR \cup R_+ \cup R_-$ is compressible but no compression body of $M \cut (\calR \cup \calS)$ has a trivial component, then the splitting is weakly reducible.
\end{lemma}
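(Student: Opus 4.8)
The plan is to prove the first assertion directly by constructing disjoint compressing disks for $S$ on the two sides, and then deduce the generalized statement by applying the first assertion componentwise. So suppose $M$ is a connected sutured manifold without toroidal sutures, that $R_+ \cup R_-$ is compressible, and that $(A,B;S)$ is a non-trivial Heegaard splitting of $M$. Recall $R_- = \bdry_- A$ and $R_+ = \bdry_- B$. Let $D$ be a compressing disk for $R_+ \cup R_-$ in $M$; without loss of generality $\bdry D$ lies in $R_- = \bdry_- A$. The first step is to push $D$ into the compression body $A$ and argue that it yields a compressing disk $D_A$ for $S$ on the $A$ side. The idea is that since $A$ is a compression body, it deformation retracts (rel $\bdry_- A$) onto the union of $\bdry_- A$ with the cocores of its $1$--handles, so a curve on $\bdry_- A$ that bounds a disk in $M$ but not in $\bdry_- A$ can be pushed across this product structure to become an essential curve on $\bdry_+ A = S$ bounding a compressing disk in $A$; here I use that $\bdry D$ is essential in $R_-$ because $R_-$ has no sphere components.

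The second and more delicate step is to produce a \emph{disjoint} compressing disk $D_B$ on the $B$ side. The key observation is that non-triviality of the splitting forces $B$ (or $A$, but we have already used $A$) to have a compressing disk for $S$, and we must arrange it to be disjoint from $D_A$. Concretely: since the splitting is non-trivial, neither $A$ nor $B$ is a product, so $B$ has at least one $1$--handle (dually $A$ has at least one $2$--handle), providing a compressing disk $D_B$ of $S$ in $B$. Because $D_A$ was obtained by pushing a disk bounded by a curve on $\bdry_- A$ up to $S$ through the product region of $A$, we can choose a representative of $\bdry D_A$ on $S$ that is ``close to'' $\bdry_- A$, away from where the cores of $B$'s $1$--handles attach to $S$; an innermost/outermost-disk or cut-and-paste isotopy argument then makes $\bdry D_A$ and $\bdry D_B$ disjoint on $S$. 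Exhibiting disjoint $D_A \subset A$ and $D_B \subset B$ is exactly the condition in Definition~\ref{defn:wr} for $(A,B;S)$ to be weakly reducible.

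For the consequence, suppose $(M,\calR,\calS)$ is a generalized Heegaard splitting with $\calR \cup R_+ \cup R_-$ compressible but no compression body of $M \cut (\calR \cup \calS)$ has a trivial component. Let $F$ be a component of $\calR \cup R_+ \cup R_-$ that is compressible. Then $F$ is a component of $R_+(M_i)$ or $R_-(M_i)$ for some component $M_i$ of $M \cut \calR$, and hence $R_+(M_i) \cup R_-(M_i)$ is compressible. The Heegaard splitting $\calS \cap M_i$ of $M_i$ is non-trivial, since a trivial splitting would force one of its two compression bodies to be a product, contradicting the hypothesis that no compression body of $M\cut(\calR \cup \calS)$ has a trivial component. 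By the first assertion, this splitting of $M_i$ is weakly reducible, so the component of $\calS$ lying in $M_i$ is a weakly reducible Heegaard surface for $M_i$. By Definition~\ref{defn:wr}, the generalized Heegaard splitting $(M,\calR,\calS)$ is weakly reducible.

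I expect the main obstacle to be the second step: carefully producing the second compressing disk $D_B$ and isotoping so that it is disjoint from $D_A$. The first disk $D_A$ comes essentially for free from the product structure of $A$ near $\bdry_-A$, but ensuring disjointness with a disk on the opposite side requires controlling where $\bdry D_A$ sits on $S$ relative to the $1$--handle structure of $B$, and this is where the non-triviality hypothesis and the no-sphere-component assumptions on $R_\pm$ must be used with care.
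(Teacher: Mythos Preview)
Your Step 1 does not work as written. You claim that pushing the essential curve $\partial D \subset R_- = \partial_- A$ through the product structure of $A$ up to $S=\partial_+ A$ yields a curve bounding a compressing disk for $S$ \emph{in $A$}. But the negative boundary $\partial_- A$ of a compression body is always incompressible in $A$ (a standard fact: any disk can be isotoped off the cocores of the $2$--handles and then lies in the product $\partial_- A \times I$). Since the pushed curve $\gamma \subset S$ cobounds an annulus in $A$ with $\partial D$, a disk in $A$ bounded by $\gamma$ would produce a compressing disk for $\partial_- A$ in $A$, a contradiction. So $\gamma$ bounds no disk in $A$; your construction of $D_A$ fails at the outset. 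Step 2 is also problematic: even granting a compressing disk $D_A$ in $A$ and a compressing disk $D_B$ in $B$ (from non-triviality), the vague ``innermost/outermost'' argument cannot in general make them disjoint---that is exactly the obstruction present in strongly irreducible splittings.

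The paper does not attempt a direct construction; it simply invokes \cite[Theorem 2.1]{CG-RHS}, whose contrapositive says that a strongly irreducible splitting has incompressible $R_+ \cup R_-$. Casson and Gordon's proof is genuinely subtle (it analyzes how a compressing disk for the boundary interacts with a full system of meridian disks on both sides and uses an outermost-arc argument together with a parity/handle-sliding trick), and is not something one can reproduce by the product-push you describe. For the ``Consequently'' clause your reduction to a single piece $M_i$ is essentially right, though you should note that a compressing disk for $\calR \cup R_+ \cup R_-$ in $M$ may meet other components of $\calR$; an innermost-circle argument on $D \cap \calR$ is needed to locate a component of $\calR \cup R_+ \cup R_-$ admitting a compressing disk inside some $M_i$ before applying the first assertion.
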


\begin{proof}
The result follows from \cite[Theorem 2.1]{CG-RHS} and Definition~\ref{defn:wr}.
\end{proof}

\begin{remark}
Any weakly reducible Heegaard splitting may easily be ``inflated'' to a strongly irreducible generalized Heegaard splitting.  Let $S$ be a weakly reducible Heegaard surface for $M$, and consider a closed product neighborhood $S \times [-1,+1]$ of $S$ in $M$.  Setting $\calR = S \times \{0\}$ and $\calS = S\times \{+1\} \cup S \times \{-1\}$, the generalized Heegaard splitting $(M, \calR, \calS)$ is strongly irreducible as the components of $\calS$ give trivial splittings of the two compression bodies $M \cut \calR$.  Note that, however, $\calR$ is compressible.  (Such an ``inflation'' may be regarded as a trivial weak reduction, see Definition~\ref{defn:weakred}.)
\end{remark}

\begin{definition}[{{\em Locally thin}, cf.\ \cite[\S2]{MG-additivity} and \cite[Definition 3.2]{EMMGcircularthin}}]\label{defn:locallythin}
A generalized Heegaard splitting $(M, \calR, \calS)$ is {\em locally thin} if it is strongly irreducible and $\calR$ is incompressible.  
Note that we do not require $R_+ \cup R_-$ to be incompressible. %Added in V2
\end{definition}

\begin{definition}[{\em Weak reduction}] \label{defn:weakred}
Let $(M, \calR, \calS)$ be a weakly reducible but irreducible generalized Heegaard splitting of an irreducible sutured manifold $M$.  Assume the component $S$ of $\calS$ defines an irreducible but weakly reducible Heegaard splitting $(A,B;S)$ of its component $M'$ of $M \cut \calR$.  A {\em weak reduction} of $(M, \calR, \calS)$ along $S$ is a generalized Heegaard splitting $(M, \calR', \calS')$ obtained as follows.

Let $D_A$ and $D_B$ be collections of disjoint compressing disks for $S$ in $A$ and $B$ respectively such that
	\begin{enumerate}
		\item $\bdry D_A$ and $\bdry D_B$ are disjoint in $S$, and
		\item\label{cond:2} no component of $S \cut (\bdry D_A \cup \bdry D_B)$ is a planar surface $P$ whose boundary is contained in either  only $\bdry D_A$ or only $\bdry D_B$ (possibly with duplicity).
	\end{enumerate}
	Consider a regular neighborhood $N = \nbhd(S \cup D_A \cup D_B)$ of $S$ and the disks $D_A$ and $D_B$ and the two submanifolds of $M'$ to either side of $N$, $A_1 = (M' \cut N) \cap A$ and $B_2 = (M' \cut N) \cap B$.  Within $N$ is the result $R$ of compressing $S$ along $D_A$ and $D_B$.   Then this surface $R$ splits $N$ into two (possibly non-connected) compression bodies $B_1$ and $A_2$ which meet $A_1$ and $B_2$ along surfaces $S_1$ and $S_2$ respectively.   Consequently $R$ splits $M'$ into two (possibly non-connected) sutured submanifolds $M'_1$ and $M'_2$ for which $(A_i, B_i; S_i)$ restricts to a Heegaard splitting on each component of $M'_i$ where $i=1,2$.  This gives the generalized Heegaard splitting $(M, \calR', \calS')$ in which $\calR' = \calR \cup R$ and $\calS' = (\calS - S) \cup (S_1 \cup S_2)$.

By the proof of \cite[Theorem 3.1]{CG-RHS} (see also its treatment in the proof of \cite[Theorem 3.11]{Scharlemann-HSo3M}), if the Heegaard surface $S$ is irreducible, then the disk sets $D_A$ and $D_B$ may be chosen so that the new surface $R$ resulting from the weak reduction is incompressible in $M$.  In such a case we say the weak reduction is {\em maximal}.  That is, $(M, \calR', \calS')$ results from a maximal weak reduction of $(M, \calS, \calR)$ along $S$.
\end{definition}

\begin{remark}\label{rem:incompfromwr}
	Observe that in Definition~\ref{defn:weakred}, the new component $R$ of $\calR'$  resulting from a maximal weak reduction along a weakly reducible but irreducible surface $S$ need not be connected.  Moreover, it is possible that one of the new components $S_1$ or $S_2$ of $\calS'$ produced by the maximal weak reduction is itself weakly reducible (cf.\ the paragraph after the proof of \cite[Theorem 3.11]{Scharlemann-HSo3M}), though it is necessarily irreducible. 
\end{remark}

\begin{remark}
In lieu of condition \ref{cond:2} in our Definition~\ref{defn:weakred}, some authors simply discard any sphere component that arises when compressing $S$ by $D_A \cup D_B$.  By our assumption of irreducibility of the manifold $M$, such sphere components either imply that $S$ is reducible or a disk of $D_A$ or $D_B$ is superfluous. 
\end{remark}

\begin{definition}[{\em Iterated weak reduction}]\label{defn:iwr}
If a generalized Heegaard splitting $(M,\calR, \calS)$ is weakly reducible but irreducible, then some component of $\calS$ is weakly reducible for its component of $M \cut \calR$.    Hence a weak reduction may be performed for that component.  Consequently we may have a sequence of weak reductions
	\[ (M, \calR, \calS)=(M, \calR_0, \calS_0)  \mapsto (M, \calR_1, \calS_1) \mapsto \cdots \mapsto (M, \calR_n, \calS_n)\]
	where each weak reduction $(M, \calR_{i}, \calS_{i}) \mapsto (M, \calR_{i+1}, \calS_{i+1})$ is performed along a weakly reducible component of $\calS_i$.  In particular, the weak reduction performed along the weakly reducible component $S$ of $\calS_i$ produces three surfaces $S_1$, $R$, and $S_2$ so that $\calR_{i+1} = \calR_i \cup R$ and $\calS_{i+1} = (\calS_i - S) \cup (S_1 \cup S_2)$. The resulting generalized Heegaard splitting $(M, \calR_n, \calS_n)$ is an {\em iterated weak reduction} of $(M, \calR, \calS)$.
	If no component of $\calS_n$ is weakly reducible and each weak reduction of the sequence is maximal, then  $(M, \calR_n, \calS_n)$ is a {\em maximal iterated weak reduction} of $(M, \calR, \calS)$.
\end{definition}

\begin{lemma}\label{lem:iteratedweakreduction2locallythin}
Let $M$ be a sutured manifold with irreducible Heegaard surface $S$.
Let $(M, \calR, \calS)$ be a maximal iterated weak reduction of $(M, \nil, S)$.  Then $(M, \calR, \calS)$ is a locally thin generalized Heegaard splitting.  
\end{lemma}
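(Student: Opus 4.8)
The plan is to induct on the length $n$ of the weak reduction sequence
\[(M,\nil,S)=(M,\calR_0,\calS_0)\mapsto(M,\calR_1,\calS_1)\mapsto\cdots\mapsto(M,\calR_n,\calS_n),\]
tracking two invariants at each stage: every component of the thick surface $\calS_i$ is \emph{irreducible} as a Heegaard surface for its component of $M\cut\calR_i$, and every component of the thin surface $\calR_i$ is \emph{incompressible} in $M$. Once both invariants are established at the terminal stage $i=n$, local thinness follows immediately. Strong irreducibility is exactly the stopping condition of a maximal iterated weak reduction (no component of $\calS_n$ is weakly reducible, by Definition~\ref{defn:iwr}), and incompressibility of $\calR_n$ reduces, as explained below, to incompressibility of its individual components.

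First I would handle the base case: the starting splitting $(M,\nil,S)$ has $\calR_0=\nil$, which is vacuously incompressible, and $\calS_0=S$, which is irreducible by hypothesis. For the inductive step, suppose the invariants hold for $(M,\calR_i,\calS_i)$ and consider the maximal weak reduction along a weakly reducible component $S$ of $\calS_i$. Since $S$ is irreducible by the inductive hypothesis, this weak reduction may genuinely be chosen to be maximal in the sense of Definition~\ref{defn:weakred}; maximality is precisely what forces the new thin surface $R$ to be incompressible in $M$, and by Remark~\ref{rem:incompfromwr} it also forces the two new thick components $S_1,S_2$ to be irreducible. The remaining components of $\calR_{i+1}$ and $\calS_{i+1}$ coincide with those of $\calR_i$ and $\calS_i$, so they retain their certified properties; in particular, the previously produced thin surfaces remain incompressible in $M$, since incompressibility of a fixed surface in the fixed manifold $M$ is an intrinsic property unaffected by modifications made elsewhere in the splitting. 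Thus both invariants pass to $(M,\calR_{i+1},\calS_{i+1})$.

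Applying the invariant to the terminal splitting, every component of $\calR_n$ is incompressible in $M$, and these components are pairwise disjoint (each newly created $R$ lies in the interior of a component of $M\cut\calR_i$, hence disjoint from $\calR_i$). Any compressing disk $D$ for $\calR_n$ has boundary a single essential curve lying on one component $R^0$ of $\calR_n$, with $D\cap\calR_n=\bdry D$; it is therefore a compressing disk for $R^0$, contradicting incompressibility. Hence $\calR_n$ is incompressible. Combined with strong irreducibility from the stopping condition, $(M,\calR_n,\calS_n)$ is locally thin by Definition~\ref{defn:locallythin}.

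The step I expect to require the most care is the propagation of irreducibility of the thick surfaces, because maximality of each weak reduction — and hence incompressibility of each newly created thin component — is only available when the surface being reduced is irreducible. This is exactly where Remark~\ref{rem:incompfromwr} is essential: although a newly produced thick component may itself be weakly reducible, it is necessarily irreducible, so a maximal weak reduction remains performable at the next step of the sequence. The second delicate point is purely bookkeeping, namely confirming that once a thin surface is certified incompressible in $M$, subsequent reductions (which only alter thick components and append new thin components disjoint from the old ones) cannot disturb this; this is immediate since the property depends only on the unchanged pair consisting of the surface and $M$.
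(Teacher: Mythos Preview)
Your argument is correct and follows essentially the same approach as the paper's proof: strong irreducibility comes from the stopping condition in Definition~\ref{defn:iwr}, and incompressibility of $\calR$ comes from the maximality of each weak reduction via Remark~\ref{rem:incompfromwr}. Your version is simply more explicit, packaging the argument as an induction that tracks irreducibility of the thick components (so that a maximal weak reduction is always available at the next stage) and incompressibility in $M$ of the thin components; the paper compresses this into two sentences.
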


\begin{proof}
Since $(M, \calR, \calS)$ is the result of a maximal iterated weak reduction,  the components of $\calS$ are all strongly irreducible by Definition~\ref{defn:iwr}.  Hence the generalized Heegaard splitting is strongly irreducible.  Furthermore, as noted in Remark~\ref{rem:incompfromwr}, the components of $\calR$ produced by the maximal weak reductions are all incompressible.  Hence the generalized splitting is locally thin.
\end{proof}

\begin{definition}[{\em Amalgamation}]\label{defn:amalg}
The inverse process of a weak reduction is called an {\em amalgamation}.  
More specifically, we may speak of amalgamations of a generalized Heegaard splitting $(M, \calR, \calS)$ along certain unions of components of $\calR$.  
In addition to \cite[\S2]{schultens-SfcexS1}, see  \cite[\S3]{lackenby-HGalg} for another treatment of amalgamation. 

	Let $(M, \calR, \calS)$ be a generalized Heegaard splitting.  For $i=1,2$, let $M_i$ be two distinct connected components of $M \cut \calR$ with Heegaard splittings $(A_i, B_i; S_i)$ where $S_i$ is a component of $\calS$.  Assume the surface $R = B_1 \cap A_2 \subset \calR$ is non-empty.  ($R$ may be disconnected.)  
	We first define the {\em amalgamation along $R$} of  the generalized Heegaard splitting $(M_1  \cup_R M_2, R, S_1 \cup S_2)$  to be the Heegaard splitting $(M_1 \cup_R M_2, S')$ obtained as follows:
	
	\begin{adjustwidth}{.5cm}{.5cm}\indent
		 View the compression bodies $A_2$ and $B_1$ as being assembled from attaching $1$--handles to either side of a thickening $R \times [-1,+1]$ of $R$.  Note that we may assume that the feet of these $1$--handles have mutually disjoint projections to $R = R\times \{0\} \subset R \times [-1,+1]$.   Then we may extend all of these $1$--handles through the product $R \times [-1,+1]$ so that their feet now meet the other side.  In particular, this means that the feet of the $1$--handles of $A_2$ are in $S_1$ and the feet of the $1$--handles of $B_1$ are in $S_2$. Observe that $S_1$ tubed along these $1$--handles of $A_2$ is a connected surface that is also isotopic to $S_2$ tubed along these $1$--handles of $B_1$. In fact, extending the feet of the one-handles of both collections to only $R = R \times \{0\}$ and then tubing $R$ along these $1$--handles produces a surface $S'$ that is also isotopic to them.

	We now see that $S'$ is a Heegaard surface for $M_1 \cup_R M_2$.  View the compression body $A_1$ as also being assembled from $1$--handles attached to $\bdry_-A_1$ and a collection of $0$--handles. Then the feet of the $1$--handles from $A_2$ (now in $S_1$) can be slid off the $1$--handles of $A_1$ down to the surface $\bdry_- A_1$ and the $0$--handles.  Thus we have a new compression body $A'$ with $\bdry_- A' = \bdry_- A_1$ and $\bdry_+ A'= S'$.  Similarly we form a new compression body $B'$ with $\bdry_- B' = \bdry_- B_2$ and $\bdry_+ B' = S'$ by viewing $B_2$ as assembled from $1$--handles and sliding the $1$--handles from $B_1$ off them.  Hence we obtain the Heegaard splitting $(M_1 \cup_R M_2, S')$.
\end{adjustwidth}
	
	\noindent
	Now let $\calR' = \calR - R$ and $\calS' = \calS \cup S' -(S_1 \cup S_2)$.  Then we define the {\em amalgamation along $R$} of $(M, \calR, \calS)$ to be the generalized Heegaard splitting $(M, \calR', \calS')$.

	 Finally, an {\em amalgamation} of a generalized Heegaard splitting $(M, \calR, \calS)$ is the result of a sequence of amalgamations along surfaces as above. 
\end{definition}

\begin{remark}
	Continuing with the notation above, we may also consider amalgamations along $R$ when $R$ is only a proper subset of components of $B_1 \cap A_2$.  In this situation we may first ``inflate'' the generalized Heegaard splitting $(M, \calR, \calS)$ by inserting a product manifold with the trivial splitting for each component of $(B_1 \cap A_2) - R$ so that $B_1$ and $A_2$ meet only along $R$.  Then we take the amalgamation along $R$ of $(M, \calR, \calS)$ to be the amalgamation along $R$ of the ``inflated'' generalized Heegaard splitting. 
	While we will not need this extension of amalgamation,  it is interesting to note that it changes the homotopy class of the circular Morse function associated to the generalized Heegaard splitting.
\end{remark}

%---------------------------------------------------------------------
\subsection{Handle numbers and handle indices}

\begin{definition}[{\em Handle number}]\label{def:handlenumber}
	Following Goda \cite[Definition 3.4]{Goda}, the {\em handle number} of a compression body $W$ is the minimum number $h(W)$ of $1$--handles needed in its construction. In particular, if $W$ is connected, then 
	\[h(W) = g(\bdry_+ W) - g(\bdry_- W) + |\#\bdry_- W - 1|\]
	where $g(\bdry_- W)$ is the sum of the genera of the components of $\bdry_-W$ and $\#\bdry_- W$ is the number of its components.  (Set $g(\nil)=0$.)
	Viewing $W$ as built from attaching $1$--handles to $\bdry_-W$ and $0$--handles, one observes 
	\begin{itemize}
	\item exactly one $0$--handle is needed if $\bdry_-W = \nil$ (so that $W$ is a handlebody) and
	\item no $0$--handles are needed if $\bdry_- W \neq \nil$.
	\end{itemize}
	If $W$ is disconnected, then $h(W)$ is the sum of the handle numbers of its components.  

	Let $M$ be a sutured manifold.  
	The {\em handle number} $h(M, S)$ of a Heegaard splitting $(M,S)$ is the sum of the handle numbers of the compression bodies of $M \cut S$.  
	The {\em handle number} $h(M, \calR, \calS)$ of a generalized Heegaard splitting $(M, \calR, \calS)$ is the sum of the handle numbers of the compression bodies of $M \cut (\calR \cup \calS)$.

	The {\em handle number} $h(R)$ of a Seifert surface $R$ for an oriented link is the minimum of $h(M, R, S)=h(M\cut R, S)$ among circular Heegaard splittings $(M, R, S)$ of the link exterior $M$.  The {\em handle number} $h(L)$ of an oriented link $L$ is the minimum of $h(R)$ among Seifert surfaces of $L$ 	
	\begin{quote}
		{\sc Attention:} Our handle numbers $h(R)$ and $h(L)$ are {\em twice} what Goda defines.
	\end{quote}
\end{definition}

\begin{proposition}[{\cite[Proposition 3.7]{Goda}}]\label{prop:MNhandle}
If $L$ is an oriented link in $S^3$, then $\MN(L) = h(L)$. \qed
\end{proposition}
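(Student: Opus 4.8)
The plan is to set up a dictionary between regular circle-valued Morse functions $f\colon S^3 - L \to S^1$ and circular Heegaard splittings of the exterior $M$, under which critical points of index $1$ and $2$ correspond exactly to the $1$--handles tallied by the handle number, and then to minimize over both descriptions. I would prove the two inequalities separately.

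For $\MN(L) \ge h(L)$: start with a regular Morse function $f\colon M \to S^1$ attaining $\MN(L)$ critical points. The first step is to rearrange and cancel critical points so that none of index $0$ or $3$ remain. The key is that each regular level set of $f$ is a Seifert surface for $L$, hence a properly embedded surface with boundary $L \ne \nil$ and no closed sphere components; since $M$ is irreducible, an index-$0$ critical point (which would nucleate a closed sphere level component) can be cancelled against a later index-$1$ critical point, and dually index $3$ against index $2$. After a further Morse rearrangement so that, traversing $S^1$ from a regular value, all index-$1$ critical points precede all index-$2$ ones, the regular level $R$ just before the index-$1$ critical points is a Seifert surface. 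Cutting along $R$ yields a sutured manifold $M \cut R$ without toroidal sutures; the index-$1$ critical points assemble a compression body $A$ from $\bdry_- A = R$ using one $1$--handle each (and no $0$--handles, as $R \ne \nil$), while the index-$2$ critical points dually assemble $B$ using $1$--handles (the dual $2$--handles). The common regular level $S$ is a Heegaard surface, so $(M, R, S)$ is a circular Heegaard splitting with $h(M,R,S) = h(A) + h(B)$ equal to the number of index-$1$ plus index-$2$ critical points, namely $\MN(L)$. Thus $h(L) \le h(R) \le \MN(L)$.

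For $h(L) \ge \MN(L)$: take a Seifert surface $R$ and circular Heegaard splitting $(M, R, S)$ realizing $h(L)$. Because $\bdry_- A$ and $\bdry_- B$ each contain a copy of the nonempty surface $R$, neither compression body is a handlebody, so no $0$--handles or $3$--handles appear and $h(M,R,S)$ counts exactly the $1$--handles of $A$ together with the dual $1$--handles of $B$. I would then build a circle-valued Morse function directly from this handle data: place $R$ over the basepoint of $S^1$, realize each $1$--handle of $A$ by an index-$1$ critical point while ascending to $S$, and each dual $1$--handle of $B$ by an index-$2$ critical point while descending from $S$ back to $R$, with regularity near $L$ supplied by the annular sutures. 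This $f$ has exactly $h(A)+h(B) = h(M,R,S)$ critical points, all of index $1$ or $2$, giving $\MN(L) \le h(L)$.

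The main obstacle is the rearrangement step in the first inequality: one must carefully invoke irreducibility of $S^3 - L$ (every embedded sphere bounds a ball) to cancel all index-$0$ and index-$3$ critical points against index-$1$ and index-$2$ ones without increasing the total count, and then separate the survivors into a single ascending-then-descending circular pass so that the output is a genuine circular Heegaard splitting. One must also track the factor-of-two convention flagged in Definition~\ref{def:handlenumber}: the paper's $h$ records the $1$--handles of both compression bodies --- equivalently the index-$1$ and index-$2$ critical points together --- which is exactly the full Morse-Novikov count, whereas Goda's original handle number tallies only half of these.
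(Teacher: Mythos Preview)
The paper does not give a proof of this proposition; it is stated with a bare \qed and a citation to \cite[Proposition~3.7]{Goda}. Your sketch is essentially the standard Morse--theoretic argument underlying Goda's result: set up the dictionary between regular circle-valued Morse functions and circular Heegaard splittings, cancel index--$0$/$3$ critical points against index--$1$/$2$ ones using irreducibility, and rearrange into self-indexing form so that the thin and thick levels give $R$ and $S$.

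One small caveat: you invoke irreducibility of $S^3 - L$ to perform the $0$--$1$ and $2$--$3$ cancellations, but for a \emph{split} link the exterior is reducible and that step needs extra care. The paper sidesteps this by declaring in \S\ref{sec:prelim} that all manifolds are assumed irreducible, and in any case its main results concern knots, whose exteriors are always irreducible; under that standing hypothesis your argument is sound. You are also right to flag the factor-of-two convention: with the paper's $h(R)$ counting the $1$--handles of \emph{both} compression bodies, the equality $\MN(L)=h(L)$ comes out on the nose.
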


\begin{definition}[{\em Handle index}]\label{defn:handleindex}
The {\em handle index} $j(W)$ of a compression body $W$ built by attaching $1$--handles to $\bdry_- W$ and a collection of $0$--handles is the number
\[j(W) = \#(\mbox{$1$--handles}) -  \#(\mbox{$0$--handles}).\]
  Dually, if $W$ is built by attaching $2$--handles to $\bdry_+ W$ and filling a collection of resulting $2$--spheres with $3$--handles, then 
  \[j(W) = \#(\mbox{$2$--handles}) -  \#(\mbox{$3$--handles}).\]
Equivalently, the handle index $j(W)$ of a compression body $W$
is its handle number $h(W)$ minus the number of handlebodies in $W$.  

	Let $M$ be a sutured manifold.  The {\em handle index} $j(M, S)$ of a Heegaard splitting $(M,S)$ is the sum of the handle indices of the compression bodies of $M \cut S$.  The {\em handle index} $j(M, \calS, \calR)$ of a generalized Heegaard splitting $(M, \calS, \calR)$ is the sum of the handle indices of the compression bodies of $M \cut (\calR \cup \calS)$.  
\end{definition}

\begin{remark}
In \cite[Definition 2.1]{SS-comparing} and \cite[Definition 2.5]{SS-annuli}, Scharlemann and Schultens define the handle index of a (connected) compression body to be $J(W) = \chi(\bdry_-W) - \chi(\bdry_+ W)$. 
This index is {\em twice} our handle index.  That is, $J(W)=2j(W)$.   
\end{remark}

\begin{lemma}
	\label{lem:equivalentexchange}
	For generalized Heegaard splittings, the handle index
	is preserved by weak reductions and amalgamations.
\end{lemma}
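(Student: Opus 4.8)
The plan is to reduce everything to a single topological bookkeeping identity for the Euler characteristic. The key point is that for any (possibly disconnected) compression body $W$, the handle index is determined by its horizontal boundary,
\[ j(W) = \tfrac12\bigl(\chi(\bdry_- W) - \chi(\bdry_+ W)\bigr). \]
Indeed, building $W$ from $\bdry_- W$ by attaching $m$ zero--handles and $k$ one--handles (in the interior, away from the vertical boundary) gives $\chi(\bdry_+ W) = \chi(\bdry_- W) + 2m - 2k$, so the right-hand side is $k-m = j(W)$; this is exactly the relation $J(W)=2j(W)$ recorded after Definition~\ref{defn:handleindex}, it is additive over components, and it correctly returns $g-1$ for a genus--$g$ handlebody. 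Summing over the compression bodies of $M \cut (\calR \cup \calS)$ and counting incidences — each component of $\calS$ is the positive boundary of exactly two compression bodies, each component of $\calR$ is the negative boundary of exactly two, and each component of $R_+ \cup R_-$ the negative boundary of exactly one — yields the closed formula
\[ j(M, \calR, \calS) = \sum_{R_i \subset \calR} \chi(R_i) \;-\; \sum_{S_j \subset \calS} \chi(S_j) \;+\; \tfrac12\,\chi(R_+ \cup R_-). \]

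With this formula in hand a weak reduction becomes transparent. By Definition~\ref{defn:weakred} it replaces $\calR$ by $\calR \cup R$ and the single thick component $S$ by the two components $S_1, S_2$, leaving $R_+ \cup R_-$ and all other components untouched. Hence the change in handle index is
\[ \Delta j = \chi(R) - \bigl(\chi(S_1) + \chi(S_2) - \chi(S)\bigr), \]
and the whole lemma comes down to the surgery identity $\chi(R) + \chi(S) = \chi(S_1) + \chi(S_2)$.

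To establish this identity I would track the three surfaces through the neighborhood $N = \nbhd(S \cup D_A \cup D_B)$. The surface $R$ is $S$ compressed along all of $D_A \cup D_B$; the new thick surface met by the outer $A$--region $A_1$ is $S$ compressed along $D_A$, and the one met by the outer $B$--region $B_2$ is $S$ compressed along $D_B$ (these are $S_1$ and $S_2$ in some order). Since compressing a surface along one disk raises its Euler characteristic by $2$, we get $\chi(S_1) = \chi(S) + 2|D_A|$, $\chi(S_2) = \chi(S) + 2|D_B|$, and $\chi(R) = \chi(S) + 2|D_A| + 2|D_B|$, so that $\chi(S_1) + \chi(S_2) - \chi(S) = \chi(R)$ and $\Delta j = 0$. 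I expect this geometric step — correctly identifying the horizontal boundaries $\bdry_\pm$ of the four new compression bodies $A_1, B_1, A_2, B_2$ and confirming that $S_1, S_2, R$ really are these partial and total compressions of $S$ — to be the main obstacle; here condition~\ref{cond:2} of Definition~\ref{defn:weakred} together with irreducibility of $M$ ensures that no sphere components are created and silently discarded, so every disk genuinely contributes $+2$ and each piece is a bona fide compression body.

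Finally, an amalgamation is by definition the inverse of a weak reduction: it deletes a thin component $R$ and fuses two thick surfaces $S_1, S_2$ into a single thick surface $S'$ with $\chi(S') = \chi(S_1) + \chi(S_2) - \chi(R)$. This is precisely the previous computation run backwards, so it too leaves $j(M,\calR,\calS)$ unchanged. Since a general weak reduction or amalgamation is a finite sequence of such elementary moves (Definitions~\ref{defn:iwr} and~\ref{defn:amalg}), the invariance of the handle index follows by induction on the length of the sequence. \qed
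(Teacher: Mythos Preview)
Your proof is correct but takes a more elaborate route than the paper's. The paper dispatches the lemma in two sentences: weak reductions and amalgamations may create or cancel $0$--/$1$--handle pairs (or dually $2$--/$3$--handle pairs), but they never create or cancel $1$--/$2$--handle pairs; since $j$ counts $\#(\text{$1$--handles}) - \#(\text{$0$--handles})$ summed over all compression bodies, it is manifestly unchanged. This is a purely combinatorial handle count, requiring no Euler characteristic computation.

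Your approach instead passes through the identity $j(W) = \tfrac12\bigl(\chi(\bdry_- W) - \chi(\bdry_+ W)\bigr)$ to obtain the closed formula $j(M,\calR,\calS) = \sum_{\calR}\chi - \sum_{\calS}\chi + \tfrac12\chi(R_+\cup R_-)$, and then verifies invariance by the surgery identity $\chi(R)+\chi(S)=\chi(S_1)+\chi(S_2)$. This is sound, and the global formula is a genuinely useful byproduct --- it makes transparent that $j$ depends only on the isotopy classes of $\calR$ and $\calS$, not on any handle decomposition. The cost is length and the need to correctly identify $S_1$ and $S_2$ as the partial compressions of $S$ (which you do). For amalgamation you appeal to its being the inverse of weak reduction; this is fine, though one could equally note directly that the handles of $A_1,A_2$ (resp.\ $B_1,B_2$) are simply pooled into $A'$ (resp.\ $B'$), giving $j(A')+j(B') = j(A_1)+j(A_2)+j(B_1)+j(B_2)$ --- which is exactly the paper's observation.
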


\begin{proof}
While weak reductions and amalgamations may create or cancel $0$-- and $1$--handle pairs or $2$-- and $3$--handle pairs, these operations never create or cancel $1$-- and $2$--handle pairs.  Hence these operations preserve the handle index.
\end{proof}

\begin{lemma}\label{lem:handlenumber}
Any two generalized Heegaard splittings without handlebodies that are related by a sequence of weak reductions and amalgamations have the same handle number.
\end{lemma}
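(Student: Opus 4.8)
The plan is to deduce this immediately from Lemma~\ref{lem:equivalentexchange} together with the observation that, for a generalized Heegaard splitting, the handle index and the handle number coincide exactly when there are no handlebody components. The content of the lemma has essentially already been isolated in the index-invariance of Lemma~\ref{lem:equivalentexchange}; what remains is to convert a statement about the index $j$ into one about the number $h$ at the two endpoints of the sequence.

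First I would invoke Lemma~\ref{lem:equivalentexchange}. Since the two given splittings $(M, \calR, \calS)$ and $(M, \calR', \calS')$ are related by a finite sequence of weak reductions and amalgamations, and each such operation preserves the handle index, transitivity yields $j(M, \calR, \calS) = j(M, \calR', \calS')$. I would stress that this holds \emph{regardless} of whether the intermediate splittings in the sequence contain handlebody components: the index invariance is valid at every individual step, so it propagates across the whole chain without any hypothesis on the intermediate terms.

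Next I would relate $j$ to $h$ at the two endpoints. By Definition~\ref{defn:handleindex}, for each compression body $W$ the handle index $j(W)$ is the handle number $h(W)$ minus the number of handlebody components of $W$. Summing over the compression bodies of $M \cut (\calR \cup \calS)$, the difference $h(M, \calR, \calS) - j(M, \calR, \calS)$ is precisely the total number of handlebody components of the splitting. Since $(M, \calR, \calS)$ has no handlebodies this difference vanishes, so $h(M, \calR, \calS) = j(M, \calR, \calS)$, and likewise $h(M, \calR', \calS') = j(M, \calR', \calS')$. Chaining the three equalities gives $h(M, \calR, \calS) = j(M, \calR, \calS) = j(M, \calR', \calS') = h(M, \calR', \calS')$, which is the claim.

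There is no substantial obstacle here, as the real work lives in Lemma~\ref{lem:equivalentexchange}. The one point that warrants care — and the reason the argument must run through the index rather than through $h$ directly — is that handlebodies may well appear and disappear at intermediate stages of the sequence; consequently the handle number is \emph{not} preserved step by step, whereas the handle index is insensitive to handlebodies and so furnishes the correct invariant to track. The hypothesis that the two endpoints have no handlebodies is exactly what is needed to recover equality of handle numbers from equality of indices.
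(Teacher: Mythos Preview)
Your proof is correct and follows exactly the paper's approach: invoke Lemma~\ref{lem:equivalentexchange} for invariance of $j$ along the sequence, then use $h(W)=j(W)$ for compression bodies without handlebody components to identify $h$ with $j$ at the two endpoints. Your explicit remark that intermediate splittings may contain handlebodies (so one must track $j$ rather than $h$) is a helpful elaboration, but the underlying argument is the same.
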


\begin{proof}
Since $h(W) = j(W)$ for a connected compression body $W$ that is not a handlebody, this is an immediate consequence of Lemma~\ref{lem:equivalentexchange}
\end{proof}

%---------------------------------------------------------------------
\section{Proofs of Lemma~\ref{lem:incompseifertsfce} and Theorem~\ref{thm:main} }\label{sec:proofs}

First we show that the handle number of a knot may be realized by an incompressible Seifert surface.

\begin{proof}[of Lemma~\ref{lem:incompseifertsfce}]
By Definition~\ref{def:handlenumber}, $K$ has a Seifert surface $R$ such that $h(K) = h(R)$.  If $R$ is incompressible, then we are done.  So assume $R$ is compressible.  Figure~\ref{fig:incompseifertsfce} provides a schematic of  the remainder of the proof in this case.

\begin{figure}
\centering
\includegraphics[width = \textwidth]{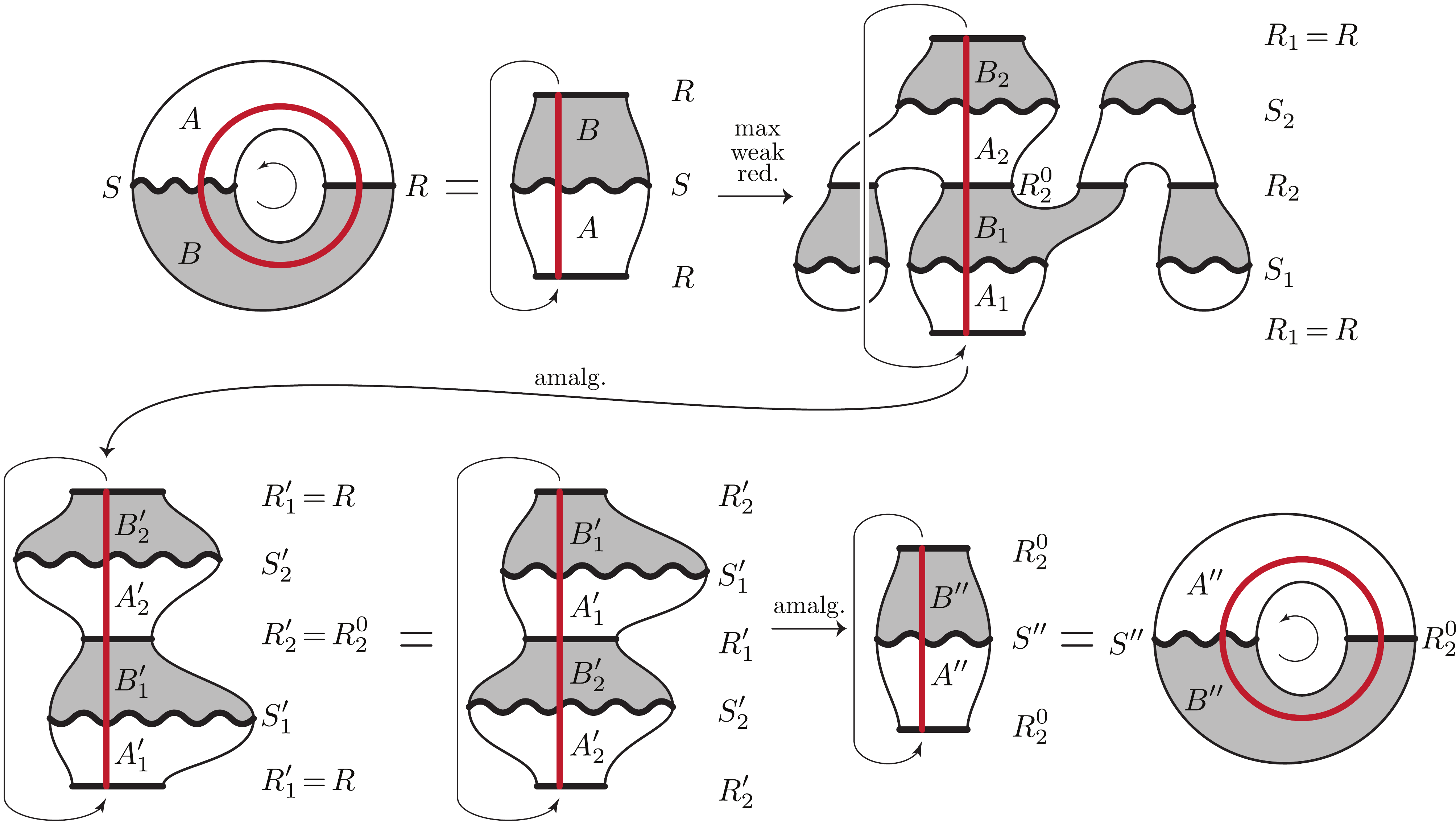}
\caption{Shown is a schematic of the sequence of a maximal weak reduction followed by amalgamations that transforms a circular Heegaard splitting $(M, R, S)$ with $R$ compressible into a circular Heegaard splitting $(M, R_2^0, S'')$ with $R_2^0$ incompressible and the same handle number.  The central loops/arcs (shown in red) represent the toroidal/annular sutures and the vertical boundary of the compression bodies.}
\label{fig:incompseifertsfce}
\end{figure}

Let $(M,R,S)$ be an associated circular Heegaard splitting of the knot exterior $M=S^3\cut \nbhd(K)$ realizing $h(R)$. 
That is, $h(M,R,S) = h(R)$.
Since $R$ is compressible,
this splitting is weakly reducible by Lemma~\ref{lem:bdrycompisWR}.  
(If the splitting had a trivial compression body, then both compression bodies of $M \cut (R \cup S)$ would be trivial.  Hence $R$ would be a fiber of a fibration of $M$ and thus be incompressible.) %Added in V2 for clarification of use of Lemma~\ref{lem:bdrycompisWR}
Therefore a maximal weak reduction of $S$ produces a generalized circular Heegaard splitting $(M, \calR, \calS)=(M, R_1 \cup R_2, S_1 \cup S_2)$ where $R=R_1$ and $R_2$ is an incompressible and possibly disconnected surface. 
Furthermore, we may view $R_2$ as dividing the connected sutured manifold $M \cut R_1$ into two (potentially disconnected) sutured manifolds $M_1$ and $M_2$ with Heegaard splittings $(A_1, B_1; S_1)$ and $(A_2, B_2; S_2)$ respectively such that $R_2 = B_1 \cap A_2$ and $R_1 = B_2 \cap A_1$.  Here, the compression bodies of these two splittings may be disconnected.  The compression bodies $B_1$ and $A_2$ have no handlebody components while the compression bodies $A_1$ and $B_2$ each consist of only handlebodies except for their component that meets $R_1$.   
(Following Definition~\ref{defn:ghs}, these two splittings of $M_1$ and $M_2$ are generalized Heegaard splittings.  For each $i=1,2$, on each component of $M_i$ the surface $S_i$ restricts to a connected Heegaard surface dividing that component of $M_i$ into a connected compression body component of $A_i$ and a connected compression body component of $B_i$.)

Since $S$ is a Seifert surface for $K$, it has a single boundary component.  Because the incompressible surface $R_2$ is obtained by compressing $S$, it also has a single boundary component.  Hence exactly one component of $R_2$, say $R_2^0$, has boundary and all other components are closed.  
In particular, $R_2^0$ is an incompressible Seifert surface for $K$.  
We will show that $h(K) = h(R_2^0)$. 

%CLARIFY THE TREE STRUCTURE AND SUCCESSIVE AMALGAMATIONS!!! (Ref Items \#13, \#14)

Because $R_1$ is just the Seifert surface $R$, the closed components of $\calR$ are the components of $R_2-R_2^0$.  Furthermore, observe that each of these closed components is null homologous in $M$, and so each bounds a submanifold of $M$.
As every component of $M \cut \calR$ is a component of $M_1$ or $M_2$, it
has a Heegaard splitting induced from the restriction of either $(A_1, B_1;S_1)$ or $(A_2, B_2;S_2)$.  Thus, except for the two meeting $\bdry M$, every component of $M \cut \calR$ must have
 one of the following forms:
\begin{enumerate}
	\item $A_1^i \cup_{S_1^i} B_1^i$ where $A_1^i$ is a handlebody component of $A_1$ and  $B_1^i$ is a connected component of the compression body $B_1$, or
	\item $A_2^i \cup_{S_2^i} B_2^i$ where $B_2^i$ is a handlebody component of $B_2$ and  $A_2^i$ is a connected component of the compression body $A_1$.
\end{enumerate}
The two components  of $M \cut \calR$ meeting $\bdry M$ must both meet $R_1$.  Thus they have induced Heegaard splittings in which one compression body has positive boundary $R_1$ and the other compression body has positive boundary containing $R_2^0$.
% Thus the two have induced Heegaard splittings of the two following forms, one for each.
%\begin{enumerate}
%	\item  $A_1^0 \cup_{S_1^0} B_1^0$ where $A_1^0$ is a connected compression body component of $A_1$  with positive boundary $R_1$ and  $B_1^0$ is a connected component of the compression body $B_1$ with positive boundary containing $R_2^0$, and
%	\item $A_2^0 \cup_{S_2^0} B_2^0$ where $B_2^0$ is a connected compression body component of $B_2$  with positive boundary $R_1$ and  $A_2^0$ is a connected component of the compression body $A_1$ with positive boundary containing $R_2^0$.
%\end{enumerate}
 
Perform amalgamations along the closed components of $R_2$ one at a time.  At each step the amalgamation is done along a remaining closed component of $R_2$ that is {\em innermost}, bounding a submanifold of $M$ that is disjoint from the other remaining components.  
Let us illustrate this using the schematic at
the top right of Figure~\ref{fig:incompseifertsfce}.  There we have depicted the splitting $(M, \calR, \calS)$ with $R_2$ as four horizontal segments in a row; call them $R_2^{\ell}$, $R_2^0$, $R_2^{m}$, and $R_2^{r}$ in order.  The segments $R_2^{\ell}$ and $R_2^r$ indicate innermost components of $R_2$.   Following Definition~\ref{defn:amalg}, an amalgamation along $R_2^r$ amalgamates the splittings of the components of $M_1$ and $M_2$ incident to it into a splitting of the manifold bounded by $R_2^m$ that consists of a black handlebody and a white compression body.  After this first amalgamation along $R_2^r$, the component $R_2^m$ is now innermost.  The next amalgamation may be performed along $R_2^\ell$ or $R_2^m$. Thereafter a third amalgamation on the final closed component of $R_2$ leaves us with what is depicted in the lower left of Figure~\ref{fig:incompseifertsfce}.
In general, this sequence of amalgamations results in a circular generalized  Heegaard splitting $(M, \calR', \calS')=(M, R_1' \cup R_2', S_1' \cup S_2')$ in which each surface $R_1'=R_1$, $R_2'=R_2^0$, $S_1'$, and $S_2'$ is connected and none is separating.  
%Indeed, the generalized Heegaard splitting $(M, R_1 \cup R_2, S_1 \cup S_2)$ restricts to generalized Heegaard splittings on the two components of $M \cut (R_1 \cup R_2^0)$. Then, these amalgamations along the closed components of $R_2$ result in a single connected  Heegaard surface for each of the two components of $M \cut (R_1 \cup R_2^0)$. Thus we obtain the generalized Heegaard splitting $(M, R_1 \cup R_2^0, S_1' \cup S_2') = (M, R_1' \cup R_2', S_1' \cup S_2')$ as indicated in Figure~\ref{fig:incompseifertsfce}.
(Since an amalgamation is the inverse process of a weak reduction, one may also obtain this splitting $(M, \calR', \calS')$ as a  weak reduction of the original splitting $(M, R, S)$.)

Now amalgamate $(M, \calR', \calS')$ along $R_1$ to obtain a splitting $(M, R_2^0, S'')$.  Since the surface $S''$ is an amalgamation of two connected surfaces, it too is connected.  
Since the  $(M, R_2^0, S'')$ and $(M, R, S)$ are splittings without handlebodies that are related by a sequence of weak reductions and amalgamations, they have the same handle number by Lemma~\ref{lem:handlenumber}. 
Hence $h(R_2^0)=h(M, R_2^0, S'') = h(M, R, S)=h(R)$.   
Therefore $R_2^0$ is an incompressible Seifert surface for $K$ such that $h(K)=h(R_2^0)$.
\end{proof}

Next we prove the additivity of Morse-Novikov number for knots by showing their handle number is additive.  For this we recall key results of Eudave-Mu\~noz and Manjarrez-Guti\'errez about locally thin circular generalized Heegaard splittings of exteriors of connected sums of knots.

\begin{theorem}[(Proposition 5.1 and Corollary 5.3 \cite{EMMGcircularthin})]\label{thm:EMMGsplit}
Let $M$ be the exterior of $K_a \# K_b$.  Let $Q \subset M$ be the properly embedded annulus that decomposes $M$ into the exteriors $M_a$ and $M_b$ of $K_a$ and $K_b$.  If $(M, \calR, \calS)$ is a locally thin circular generalized Heegaard splitting, then $\calR \cup \calS$ can be isotoped in $M$ so that
\begin{enumerate}
\item $Q \cut (\calR \cap \calS)$ is a collection of vertical rectangles in the compression bodies of the splitting, and
\item  the restriction $(M_i, \calR \cap M_i, \calS \cap M_i)$ is a circular generalized Heegaard splitting for each $i=a,b$. \qed
\end{enumerate}
%\qed
\end{theorem}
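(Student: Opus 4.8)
The plan is to follow the standard strategy for positioning an essential surface with respect to a (circular generalized) Heegaard splitting: isotope $Q$ into a position of least complexity relative to $\calR \cup \calS$, and then argue that minimality forces every piece of $Q$ to be a vertical rectangle. First I would record that the summing annulus $Q$ is essential, i.e.\ incompressible and boundary-incompressible in $M$ (a compression or boundary-compression of $Q$ would produce a compressing disk for $\bdry\nbhd(K)$, forcing a summand to be trivial; that degenerate case makes $Q$ boundary-parallel and the conclusion is immediate, so I may assume both summands are nontrivial). I then fix a transverse intersection of $Q$ with $\calR \cup \calS$, noting that $\bdry Q$ consists of two meridians lying in the sutures of $M$, and adopt a complexity measure such as the number of components of $Q \cap (\calR \cup \calS)$, refined lexicographically by the number of components of $Q \cap \calS$. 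I choose an isotopy representative minimizing this measure.

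Next I would clear the intersections of $Q$ with the thin surface $\calR$. Because the splitting is locally thin, $\calR$ is incompressible; since $Q$ is essential and $M$ is irreducible, the usual innermost-disk and outermost-arc exchanges remove any closed curve of $Q \cap \calR$ bounding a disk on either side and any arc of $Q \cap \calR$ that is boundary-parallel in $Q$ or in $\calR$. Each such exchange strictly lowers the complexity, so in the minimal position every surviving component of $Q \cap \calR$ is essential in both surfaces; as $Q$ is an annulus, these are spanning arcs running between its two boundary meridians.

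The crux is controlling the intersections of $Q$ with the thick surface $\calS$, and here I would use strong irreducibility. Restricting to one component $M'$ of $M \cut \calR$, the surface $\calS \cap M'$ is a strongly irreducible Heegaard surface and $Q \cap M'$ is essential. The key claim is that in the minimal position $Q$ meets each of the two compression bodies $A,B$ of $M' \cut (\calS \cap M')$ in incompressible and boundary-incompressible pieces, and that such a piece of an annulus whose boundary lies in the sutures must be a vertical rectangle, i.e.\ a product disk spanning from $\bdry_+$ to $\bdry_-$ with its two remaining sides on the vertical boundary. Incompressibility of the pieces is forced because a compressing disk for $Q$ in $A$ could either be used to reduce complexity or, paired with the corresponding disk in $B$, would contradict strong irreducibility of $\calS \cap M'$; this is the standard Rubinstein--Scharlemann/Scharlemann--Schultens argument that an essential surface meets a strongly irreducible Heegaard surface only in curves essential on both sides and meets each compression body in essential subsurfaces (see \cite{Scharlemann-HSo3M, SS-annuli}). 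I expect this to be the main obstacle: one must rule out boundary-parallel and non-spanning annular pieces and track carefully how the vertical boundary -- the sutures on $\bdry M$, including the toroidal suture of the circular splitting -- meets $Q$, then use the product structure of the compression bodies to straighten each surviving essential piece into a vertical rectangle.

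Finally, once every component of $Q \cut (\calR \cup \calS)$ is a vertical rectangle, conclusion (1) holds. For (2), a vertical rectangle cuts its compression body along a product disk into a disjoint union of compression bodies, and cuts $\calR$ and $\calS$ along essential arcs into properly embedded subsurfaces; restricting the resulting picture to $M_i$ then exhibits $(M_i, \calR \cap M_i, \calS \cap M_i)$ as a decomposition of $M_i$ into compression bodies meeting along thin and thick subsurfaces, that is, as a circular generalized Heegaard splitting, for each $i=a,b$. This matches the statement of \cite[Proposition~5.1 and Corollary~5.3]{EMMGcircularthin}.
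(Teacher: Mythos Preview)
The paper does not prove this theorem; it is quoted from \cite[Proposition~5.1 and Corollary~5.3]{EMMGcircularthin} and marked with a \qed\ in lieu of proof. So there is nothing in the paper to compare your argument against directly.

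That said, your sketch is the right shape and matches the strategy of the cited source: put $Q$ in minimal position relative to $\calR \cup \calS$, use incompressibility of $\calR$ (from local thinness) together with essentiality of $Q$ to force $Q \cap \calR$ to consist of spanning arcs, and then invoke strong irreducibility of each thick surface to control $Q \cap \calS$ via the Scharlemann/Scharlemann--Schultens machinery, concluding that each piece of $Q$ in a compression body is a product (vertical) rectangle. Your honest flag on the ``main obstacle'' is accurate: the delicate step in \cite{EMMGcircularthin} is exactly the analysis inside a single strongly irreducible block, where one must rule out closed curves and non-spanning arcs of $Q \cap \calS$ and show the remaining pieces are $\bdry$--incompressible product disks; this uses a sweepout/Cerf-graphic argument in the style of Rubinstein--Scharlemann rather than a pure innermost-disk count. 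If you intend to include a self-contained proof, that is where the real work lies. Conclusion (2) then follows from (1) exactly as you say, since cutting a compression body along a vertical rectangle yields compression bodies. (Note also the evident typo in the statement: $Q \cut (\calR \cap \calS)$ should read $Q \cut (\calR \cup \calS)$, as you implicitly corrected.)
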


\begin{proof}[of Theorem~\ref{thm:main}]
Using Goda's interpretation of the Morse-Novikov number in terms of handle number (see Proposition~\ref{prop:MNhandle}), we must show that $h(K_a \# K_b) = h(K_a) + h(K_b)$.  Let $M = S^3 \cut \nbhd(K)$ be the exterior of $K$, and let $M_i  = S^3 \cut \nbhd(K_i)$ be the exterior of $K_i$ for each $i=a,b$.

\medskip
The inequality $h(K_a \# K_b) \leq h(K_a) + h(K_b)$ is straightforward as implied in \cite[Section 14.6.2]{Pajitnov-CMT}.  Indeed for each $i=a,b$ let $R_i$ be a Seifert surface for $K_i$ such that $h(K_i)=h(R_i)$.  Then each knot exterior $M_i$ has a circular Heegaard splitting $(M_i, R_i, S_i)$ realizing its handle number.
The knot exterior $M$ may be obtained by gluing $M_a$ to $M_b$ along a closed regular annular neighborhood of a meridian in each of their boundaries.  This gluing may be done so that $\bdry R_a$ and $\bdry R_b$ meet along a single arc to form a boundary sum $R = R_a \natural R_b$ that is a Seifert surface for $K$.  Similarly $\bdry S_a$ and $\bdry S_b$ meet to form a boundary sum $S= S_a \natural S_b$.  Together they give a circular Heegaard splitting  $(M,R,S)$. From this we find that  $h(K) \leq h(R) = h(R_a)+h(R_b) = h(K_a) + h(K_b)$.

\medskip
For the other inequality, by Lemma~\ref{lem:incompseifertsfce} we may assume there is an incompressible Seifert surface $R$ for the knot $K=K_a \# K_b$ such that $h(K)=h(R)$.  Then there is an associated circular Heegaard splitting $(M, R, S)$ realizing this handle number. Because $S$ is connected, the two compression bodies of this splitting are connected and neither is a handlebody.  Let $(M, \calR, \calS)$ be a locally thin generalized circular Heegaard splitting resulting from a maximal iterated weak reduction as guaranteed by Lemma~\ref{lem:iteratedweakreduction2locallythin}.  Then
\[ h(K) = h(R) = h(M,R,S) = j(M,R,S) = j(M, \calR, \calS)\]
where the first two equalities are by definition, the third follows from Definition~\ref{defn:handleindex} because $(M,R,S)$ has no handlebodies in its splitting, and the fourth is due to Lemma~\ref{lem:equivalentexchange}.

Let $Q$ be the summing annulus that splits $M$ into the exteriors of $K_a$ and $K_b$. That is, $Q$ is the annulus along which $M_a$ and $M_b$ are identified to form $M$; $M \cut Q = M_a \sqcup M_b$. 
Since the splitting $(M, \calR, \calS)$ is locally thin, Theorem~\ref{thm:EMMGsplit} shows that we may arrange that $Q$ chops the splitting into circular generalized Heegaard splittings for $M_a$ and $M_b$.  In particular the restriction of $(M, \calR, \calS)$ to $M_i$ for $i=a,b$ is the splitting $(M_i, \calR_i, \calS_i)$ where $\calR_i = \calR \cap M_i$ and $\calS_i = \calS \cap M_i$.
Observe that a (not necessarily connected) compression body obtained by chopping another compression body along a vertical rectangle has the same handle index.  Hence 
\[j(M, \calR, \calS) =  j(M_a, \calR_a, \calS_a) + j(M_b, \calR_b, \calS_b).\]
For each $i=a,b$, a sequence of amalgamations brings $(M_i, \calR_i, \calS_i)$ to a circular Heegaard splitting $(M_i, R_i, S_i)$ with no handlebodies so that 
\[j(M_i, \calR_i, \calS_i) = j(M_i, R_i, S_i) = h(M_i,R_i,S_i) \geq h(R_i) \geq h(K_i).\]
(Note that if $(M_i, R_i, S_i)$ had a handlebody in its decomposition, then $S_i$ would have a closed component bounding this handlebody.  Then the component of the compression body meeting the other side of $S_i$ would have a closed component of $R_i$ in its negative boundary.  But then an amalgamation could be performed along this closed component.)
Thus we have 
\[h(K) \geq h(K_a) + h(K_b)\]
as desired.
\end{proof}

%---------------------------------------------------------------------
\section{Satellites and handle number}\label{sec:satellite}

\begin{definition}
	[Patterns and satellites]
	\label{defn:pattern}
An oriented two-component link $k \cup c$ in $S^3$ where $c$ is an unknot, defines a {\em pattern} $P$ which is the knot $k$ in the solid torus $S^3\cut \nbhd(c)$ equipped with a preferred longitude that is the meridian of $c$.  Then, given a knot $K$ and a pattern $P$, the {\em satellite knot} $P(K)$ is obtained by replacing a regular solid torus neighborhood of $K$ with $P$ so that the preferred longitudes agree. 

Let $P$ be a pattern defined by the link $k \cup c$.
The {\em winding number} of  $P$ is the linking number of $k$ with $c$.  We say $P$ is a {\em fibered pattern} if $k$ is a fibered knot and $c$ is transverse to the fibration.
\end{definition}

\begin{definition}[{\em Circular Heegaard splitting and handle number of a pattern}]
If the winding number of $P$ is non-zero, then we may define a {\em circular Heegaard splitting of $P$} to be a circular Heegaard splitting $(M,R,S)$ of the link exterior $M=S^3 \cut \nbhd(k \cup c)$ viewed as a sutured manifold with two toroidal sutures in which $R = \overline{R} \cut \nbhd(c)$ for a Seifert surface $\overline{R}$ of $k$.  In particular, $(M,R,S)$ is the restriction of a circular Heegaard splitting  $(\overline{M}, \overline{R}, \overline{S})$ of the knot exterior $\overline{M}=S^3 \cut \nbhd(k)$ where $c$ intersects the  two compression bodies of $\overline{M} \cut (\overline{R} \cup \overline{S})$ in unions of vertical arcs. 

Then we may take the {\em handle number} $h(P)$ of $P$ to be the minimum handle number among circular Heegaard splittings of $P$.
\end{definition}

The following is an immediate consequence of definitions.
\begin{lemma}\label{lem:fiberedpattern}
	For a pattern $P$ with non-zero winding number, $h(P)=0$ if and only if $P$ is fibered. \qed
\end{lemma}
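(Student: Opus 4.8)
The plan is to unwind $h(P)=0$ into a statement about trivial compression bodies, translate that into the existence of a fibration, and then match this fibration against the definition of a fibered pattern; since the whole argument is a chain of equivalences, I would organize it as two implications and flag the one bookkeeping step that needs care.

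First I would fix a circular Heegaard splitting $(M,R,S)$ of $P$ and note that its two compression bodies $A,B$ of $M \cut (R\cup S)$ are connected (because $S$ is a connected Heegaard surface for the connected $M\cut R$) and are not handlebodies (since $R=\overline{R}\cut\nbhd(c)$ has non-empty boundary, coming from $\bdry\overline{R}$ together with the punctures forced by the non-zero winding number, so $\bdry_-A$ and $\bdry_-B$ are non-empty). By Definition~\ref{def:handlenumber}, $h(M,R,S)=h(A)+h(B)$ with each summand a non-negative integer, and for a connected non-handlebody compression body $W$ one has $h(W)=0$ exactly when $W$ is trivial. Hence $h(P)=0$ if and only if there is such a splitting in which both $A$ and $B$ are trivial, i.e.\ $A\cong R\times I\cong B$ with $\bdry_+A=\bdry_+B=S$; in that case $M\cut R\cong R\times I$, so $M$ is a surface bundle over $S^1$ with fiber $R$.

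Next I would transfer this between the exteriors $M=S^3\cut\nbhd(k\cup c)$ and $\overline{M}=S^3\cut\nbhd(k)$. By definition $(M,R,S)$ is the restriction of a splitting $(\overline{M},\overline{R},\overline{S})$ in which $c$ meets each compression body in vertical arcs. Drilling a vertical arc $\{pt\}\times I$ out of a product $\overline{R}\times I$ again leaves a product, and refilling the corresponding solid tube turns a trivial downstairs compression body back into a trivial upstairs one; hence both compression bodies of $M\cut(R\cup S)$ are trivial if and only if both compression bodies of $\overline{M}\cut(\overline{R}\cup\overline{S})$ are, which holds exactly when $\overline{M}\cut\overline{R}\cong\overline{R}\times I$, i.e.\ $\overline{M}$ fibers over $S^1$ with fiber $\overline{R}$ and $k$ is fibered. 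Moreover, $c$ being a union of vertical arcs $\{pt\}\times I$ is precisely the statement that $c$ is transverse to the fibers $\overline{R}\times\{t\}$.

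Combining the two paragraphs, $h(P)=0$ if and only if $k$ is fibered with fiber $\overline{R}$ and $c$ is transverse to the fibration, which is exactly Definition~\ref{defn:pattern}'s notion of $P$ being fibered. For the converse direction I would run the construction backwards: starting from a fibration of $\overline{M}$ with fiber $\overline{R}$, take $\overline{S}$ to be a parallel copy of $\overline{R}$ in the mapping-torus product to get two trivial compression bodies, isotope the transverse curve $c$ to a union of vertical arcs, and restrict to $M$, producing a circular Heegaard splitting of $P$ with handle number $0$. I expect no substantive obstacle; the only point requiring care is the bookkeeping in the middle paragraph, namely verifying that the single notion ``vertical arc'' simultaneously matches the drilling/refilling correspondence for trivial compression bodies and the transversality of $c$ to the fibers. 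Everything else is a direct reading of the definitions, in line with the lemma being an immediate consequence of them.
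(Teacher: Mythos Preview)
Your proposal is correct and is precisely the unwinding of definitions that the paper leaves implicit: the paper gives no proof at all (the lemma carries a \qed in its statement and is prefaced by ``The following is an immediate consequence of definitions''), so your argument is simply a careful expansion of that remark. The only place you could tighten the exposition is the bookkeeping you already flag---that drilling or refilling a vertical arc preserves triviality of a compression body, and that transversality of $c$ to a fibration lets one isotope $c$ to vertical arcs in each product piece---but both of these are standard and pose no real difficulty.
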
  

\begin{lemma}\label{lem:satellite}
	Let $P$ be a pattern with non-zero winding number, and let $K$ be a knot in $S^3$.  Then $h(P(K)) \leq h(P) + h(K)$.
\end{lemma}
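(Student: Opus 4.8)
The plan is to exhibit a single circular Heegaard splitting of the exterior of $P(K)$ whose handle number is exactly $h(P)+h(K)$, assembled from a handle-minimizing circular Heegaard splitting of the pattern together with one of the companion. To set this up, write $E=S^3\cut\nbhd(K)$ for the companion exterior and $M=S^3\cut\nbhd(k\cup c)$ for the pattern (link) exterior, and let $T$ denote the satellite torus, the common image of $\bdry\nbhd(c)$ and $\bdry\nbhd(K)$. Then the satellite exterior is $Y=M\cup_T E$, where the gluing carries the preferred longitude of $P$ (a meridian of $c$) to the longitude of $K$. I would choose a circular Heegaard splitting $(M,R,S)$ realizing $h(P)=h(M,R,S)$; by definition $R=\overline R\cut\nbhd(c)$ for a Seifert surface $\overline R$ of $k$, and $c$ -- hence $T$ -- meets the two compression bodies of $M\cut(R\cup S)$ in vertical annuli. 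Let $n$ be the number of components of $R\cap T$; since the winding number $w$ is non-zero we have $n\ge|w|\ge 1$, and these $n$ curves are mutually parallel copies of the preferred longitude of $P$ on $T$. I would also choose a circular Heegaard splitting $(E,R_K,S_K)$ realizing $h(K)=h(R_K)=h(E,R_K,S_K)$, and write $N=E\cut R_K$, so $h(N,S_K)=h(K)$.

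The crux is how the companion side is capped off. Let $R'$ be obtained by attaching to $R$ a collection of $n$ parallel copies of $R_K$ along the $n$ curves $R\cap T$ (orienting each copy to agree with the adjacent orientation of $\bdry R$ on $T$). Then $R'$ is a connected oriented surface with $\bdry R'=P(K)$, hence a Seifert surface for $P(K)$, and $Y\cut R'$ is connected. The key elementary observation is that cutting $E$ along $n$ \emph{parallel} copies of $R_K$ produces exactly one copy of $N$ together with $n-1$ product regions $R_K\times[-1,+1]$. Thus $Y\cut R'$ is assembled from the single piece $M\cut R$, the single piece $N$, and $n-1$ product pieces, all glued to one another along the vertical annuli into which $T$ is cut by $R'$. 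It is precisely the appearance of one copy of $N$ -- rather than $n$ of them -- that forces the coefficient of $h(K)$ to be $1$; the remaining $n-1$ parallel copies contribute only product regions and no handles, even though they increase the genus of $R'$.

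It then remains to build a Heegaard surface for $Y\cut R'$ and count handles. Because $T$ is vertical in the pattern splitting and $T=\bdry E$ is the toroidal suture of $E$, after cutting along $R$ and along the copies of $R_K$ the torus $T$ meets every piece of $Y\cut R'$ in vertical annuli. I would amalgamate the Heegaard surface $S$ of $M\cut R$, the Heegaard surface $S_K$ of $N$, and the trivial Heegaard surfaces of the $n-1$ product pieces along the cores of these vertical annuli, producing a connected Heegaard surface $S'$ for $Y\cut R'$. In each chosen piece neither compression body is a handlebody, since each has a copy of $R$ or $R_K$ in its negative boundary; the same persists after amalgamation, so handle number equals handle index throughout. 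Since handle index is additive under this gluing and the product pieces contribute $0$ (cf.\ Lemma~\ref{lem:equivalentexchange}), I obtain
\[ h(Y\cut R',S')=j(Y\cut R',S')=j(M,R,S)+j(N,S_K)=h(P)+h(K). \]
Hence $(Y,R',S')$ is a circular Heegaard splitting with $h(Y,R',S')=h(P)+h(K)$, and therefore $h(P(K))\le h(R')\le h(P)+h(K)$.

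The step I expect to be the main obstacle is the justification of this amalgamation along the vertical annuli of $T$: one must verify that gluing the pieces' Heegaard surfaces along the cores of these annuli genuinely yields a Heegaard surface (that is, that the two sides remain compression bodies), manage the orientations of the $R_K$-copies when $n>|w|$, and confirm that no handlebody is created so that the additive handle index does compute the handle number. All of this is a mild adaptation of the amalgamation operation of Definition~\ref{defn:amalg} to the setting where the gluing locus is a union of vertical annuli rather than a thin surface; the clean topological input that produces the coefficient $1$ on $h(K)$ is simply that $n$ parallel copies of $R_K$ cut $E$ into one copy of $N$ and $n-1$ products.
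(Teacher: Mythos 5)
Your proposal is correct and matches the paper's proof in essence: the paper likewise constructs the splitting of the satellite exterior by gluing a handle-realizing circular Heegaard splitting of the pattern exterior to an $(n-1)$-times ``inflated'' handle-realizing splitting of the companion exterior---exactly your observation that $n$ parallel copies of $R_K$ cut $E$ into one copy of $N$ plus $n-1$ product pieces---and then invokes the fact that gluing compression bodies along vertical annuli of $T$ again yields compression bodies, so the assembled surfaces form a genuine circular Heegaard splitting whose handle number is the sum. The only cosmetic differences are that the paper counts handles directly rather than passing through the handle index, and that your worry about the case $n>|w|$ never arises, since the definition of a circular Heegaard splitting of a pattern requires $c$ to meet the compression bodies in vertical arcs, forcing all intersections of $c$ with $\overline{R}$ to be coherent and hence $n=|w|$.
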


\begin{proof}
We may build a generalized Heegaard splitting for $P(K)$ by gluing together circular Heegaard splitting for $P$ with an inflated circular Heegaard splitting for $K$. Then the handle number of the assembled generalized Heegaard splitting for $P(K)$ will be the sum of the handle numbers of the two circular Heegaard splittings.  The result will then follow by using circular Heegaard splittings that realize the handle numbers of $P$ and $K$.

Assume $P$ is defined by the link $k \cup c$ where $c$ is the unknot and the winding number is $n>0$.  Then the exterior $M = S^3 \cut \nbhd(P(K))$ of $P(K)$ is the union of $M_P=S^3\cut\nbhd(k \cup c)$ and $M_K = S^3 \cut \nbhd(K)$ along the tori $\bdry \nbhd(c)$ and $\bdry \nbhd(K)$ so that the meridian of $c$ is identified with the longitude of $K$ while the longitude of $c$ is identified with the reversed meridian of $K$.

Let $(M_P, R_P, S_P)$ be a circular Heegaard splitting for $P$. Then the thin and thick surfaces $R_P$ and $S_P$ chop $M_P$ into two connected compression bodies $A_P$ and $B_P$.  Each $A_P$ and $B_P$ has $n+1$ vertical boundary components; one is a longitudinal annulus of $\bdry \nbhd(k)$ while the other $n$ are meridional annuli of $\bdry \nbhd(c)$.  

Let $(M_K, R_K, S_K)$ be a circular Heegaard splitting for $K$. Then the thin and thick surfaces $R_K$ and $S_K$ chop $M_K$ into two connected compression bodies $A_K$ and $B_K$ whose vertical boundaries are each a single longitudinal annulus of $\bdry \nbhd(K)$.
Inflate this splitting $n-1$ times to produce the generalized Heegaard splitting $(M_K, \calR_K, \calS_K)$ where $\calR_K$ consists of $n$ parallel copies of $R_K$ and $\calS_K$ consists of $S_K$ and $n-1$ parallel copies of $R_K$ alternately between those of $\calR_K$. This inserts $2(n-1)$ trivial compression bodies (homeomorphic to $R_K \times [-1,1]$) 
into the circular Heegaard splitting $(M_K, R_K, S_K)$ at $R_K$. %added clarification in V2
% at $R_K$.

Now when we join $M_K$ to $M_P$ to form the exterior of $P(K)$, we may glue $(M_K, \calR_K, \calS_K)$ to $(M_P, R_P, S_P)$ so that $\bdry \calR_K$ identifies with $\bdry R_P \cap \bdry \nbhd(c)$ and $\bdry \calS_K$ identifies with $\bdry S_P \cap \bdry \nbhd(c)$. This produces Seifert surfaces for $P(K)$ where $R$ is $R_P$ with $n$ copies of $R_K$ attached along $\bdry R_P \cap \bdry \nbhd(c)$ and $S$ is $S_P$ with $S_K$ and $n-1$ copies of $R_K$ attached along $\bdry S_P \cap \bdry \nbhd(c)$.  Furthermore, this then causes the compression bodies $A_K$ and $B_K$ and the $2(n-1)$ trivial compression bodies of $M_K \cut(\calR_K \cup \calS_K)$ to be attached to $A_P$ and $B_P$ along their vertical boundaries that are the meridional annuli of $\bdry \nbhd(c)$. 
Since gluing two compression bodies $W_1$ and $W_2$ together along a vertical boundary component (so that $\bdry \bdry_+ W_1$ meets $\bdry \bdry_+ W_2)$ produces another compression body,  $(M, R, S)$ is a circular Heegaard splitting for $M$, the exterior of $P(K)$.  
 
By construction, $h(M,R,S) = h(M_P, R_P, S_P) + h(M_K, R_K, S_K)$.  Hence, choosing to use circular Heegaard splittings for which $h(M_P, R_P, S_P) = h(P)$ and $ h(M_K, R_K, S_K)= h(K)$, then we obtain $h(P(K)) \leq h(P) + h(K)$.
\end{proof}

%---------------------------------------------------------------------
\section{Cables and the proof of Theorem~\ref{thm:cable}}\label{sec:cables}

\begin{definition}[{\em Cables}]\label{defn:cable}
For coprime integers $p,q$ with $p>0$, let $k$ be a $(p,q)$--torus knot in the Heegaard torus $T$ of $S^3$. 
One of the core curves of the solid tori bounded by $T$ has linking number $p$ with $k$, and we let $c$ be that unknot. 
Then the link $k \cup c$ defines the {\em $(p,q)$--cable pattern} $P_{p,q}$.  For a knot $K$, its {\em $(p,q)$--cable} is the satellite knot $K_{p,q} = P_{p,q}(K)$.  The {\em cabling annulus} for the cabled knot $K_{p,q}$ is the image of the annulus $T-\nbhd(k)$ of $P_{p,q}-\nbhd(k)$ in the exterior $S^3\cut \nbhd(K_{p,q})$ of the cabled knot.
\end{definition}

\begin{lemma}
	\label{lem:fiberedcable}
	$P_{p,q}$ is a fibered pattern.
\end{lemma}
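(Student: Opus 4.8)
The plan is to invoke the classical fact that torus knots are fibered, to realize that fibration concretely as a Milnor fibration, and then to check the transversality of the core $c$ by direct computation. By Definition~\ref{defn:pattern}, in order to conclude that $P_{p,q}$ is a fibered pattern I must verify two things: (i) the $(p,q)$--torus knot $k$ is a fibered knot, and (ii) the core curve $c$ is transverse to the fibration of the complement of $k$.

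For the setup I would realize $S^3$ as the unit sphere $\{(z_1,z_2)\in\mathbb{C}^2 : |z_1|^2+|z_2|^2=1\}$ and take $f(z_1,z_2)=z_1^{\,p}-z_2^{\,q}$. Since $\gcd(p,q)=1$, the link $V(f)\cap S^3$ of the singularity is connected and is precisely the $(p,q)$--torus knot $k$; the condition $z_1^{\,p}=z_2^{\,q}$ forces $|z_1|^{\,p}=|z_2|^{\,q}$, so $k$ lies on the Heegaard torus $T=\{|z_1|=a_1,\ |z_2|=a_2\}$ with $a_1^{\,p}=a_2^{\,q}$ and $a_1^2+a_2^2=1$. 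The two solid tori bounded by $T$ are then $\{|z_1|\le a_1\}$ and $\{|z_1|\ge a_1\}$, with cores $c_1=\{z_1=0\}$ and $c_2=\{z_2=0\}$. The Milnor fibration $\phi=f/|f|\colon S^3\cut k\to S^1$ exhibits $k$ as fibered, which settles (i).

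For (ii) I would simply restrict $\phi$ to each core. On $c_2=\{z_2=0,\ |z_1|=1\}$ one has $\phi(z_1,0)=z_1^{\,p}$, a degree--$p$ covering map of $S^1$, while on $c_1=\{z_1=0,\ |z_2|=1\}$ one has $\phi(0,z_2)=-z_2^{\,q}$, a degree--$q$ covering map. Each restriction is a submersion, so its tangent direction is never contained in $\ker d\phi$; hence \emph{both} cores are everywhere transverse to the fibers of $\phi$. In particular the core $c$ singled out in Definition~\ref{defn:cable} is transverse to the fibration, giving (ii) and proving the lemma. Since the linking number of $k$ with a transverse core equals the degree of $\phi$ there, one gets $\lk(k,c_2)=p$, identifying $c_2$ with the chosen $c$ of winding number $p$.

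The only point requiring care is the bookkeeping that matches the concrete Milnor picture to Definition~\ref{defn:cable}: confirming that $V(f)\cap S^3$ really sits on a genuine Heegaard torus and that the distinguished curve $c$ (the core of linking number $p$) corresponds to one of the two cores $c_i$. I expect this to be the main, but harmless, obstacle, since the transversality computation succeeds for both cores at once, so the precise identification of $c$ with $c_2$ versus $c_1$ is not actually needed for the conclusion.
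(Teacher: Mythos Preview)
Your argument is correct and takes a genuinely different route from the paper's. The paper argues via Seifert fibrations: $S^3$ carries a Seifert fibration in which the torus knot $k$ is a regular fiber and $c$ is an exceptional fiber of order $p$; since the circle fibers of this Seifert fibration on the torus knot exterior are transverse to the pages of its surface fibration over $S^1$ (the pages being horizontal surfaces for the Seifert structure), transversality of $c$ follows at once. You instead realize the surface fibration explicitly as the Milnor map $\phi=f/|f|$ for $f(z_1,z_2)=z_1^{\,p}-z_2^{\,q}$ and verify transversality by a direct computation on the cores. The paper's proof is more conceptual and transfers readily to other Seifert-fibered settings; yours is more self-contained, requiring no acquaintance with Seifert fibrations, and it delivers the winding-number identification for free as the degree of $\phi|_{c_2}$. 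One small caveat: your polynomial presupposes $q>0$, whereas Definition~\ref{defn:cable} allows any integer $q$ coprime to $p$; for $q<0$ you can either replace $z_2^{\,q}$ by $\bar z_2^{\,|q|}$ in the construction or simply observe that $P_{p,q}$ and $P_{p,-q}$ are mirror patterns and that fiberedness of a pattern is preserved under mirroring.
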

\begin{proof}
	This is rather well-known. Let $k \cup c$ be the link described in Definition~\ref{defn:cable}.  Then there is a Seifert fibration of $S^3$ in which $k$ is a regular fiber and $c$ is an exceptional fiber of order $p$. (If $p=1$ then $c$ may also be regarded as a regular fiber.)  This Seifert fibration restricts to a Seifert fibration on the exterior of $k$.  Since torus knots are fibered knots, the exterior of $k$ also fibers as a surface bundle over $S^1$.  As the Seifert fibration is transverse to this fibration, we see that $c$ is transverse to the fibration.  Hence $P_{p,q}$ is a fibered pattern.
\end{proof}

The proof of Theorem~\ref{thm:EMMGsplit} given in \cite{EMMGcircularthin} extends directly for any properly embedded essential annulus in a manifold with locally thin circular generalized Heegaard splitting as long as no boundary curve of the annulus is isotopic to a boundary curve of the thin surface.  We record this extension here as we will apply it with a cabling annulus. 
\begin{theorem}[({Cf.\ Theorem~\ref{thm:EMMGsplit} and \cite[Proposition 5.1 and Corollary 5.3]{EMMGcircularthin}})]
	\label{thm:annulus}
	Let $M$ be an irreducible sutured manifold with toroidal sutures.  Let $Q$ be a properly embedded essential annulus with $\bdry Q$ in the toroidal sutures of $M$.  
	If $(M, \calR, \calS)$ is a locally thin circular generalized Heegaard splitting such that no curve of $\bdry Q$ is isotopic in $\bdry M$ to a curve of $\bdry \calR$, then $\calR \cup \calS$ can be isotoped in $M$ so that 
	\begin{itemize}
		\item $Q \cap (\calR \cup \calS)$ is a collection of vertical rectangles in the compression bodies of the splitting, and 
		\item chopping along $Q$ renders $(M, \calR, \calS)$ into a circular generalized Heegaard splitting $(M \cut Q, \calR \cut Q, \calS \cut Q)$. \qed
	\end{itemize}
%	\qed
\end{theorem}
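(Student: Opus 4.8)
The plan is to reduce to the proof of Theorem~\ref{thm:EMMGsplit} in \cite{EMMGcircularthin} by isolating exactly which properties of the summing annulus are used there and checking that the essential annulus $Q$ enjoys the same properties once the slope hypothesis is imposed. First I would put $Q$ in general position with respect to $\calR \cup \calS$, so that $Q \cap (\calR \cup \calS)$ is a finite collection of simple closed curves together with properly embedded arcs whose endpoints lie on $\bdry Q$ in the toroidal sutures. Among all isotopy representatives of $Q$ keeping $\bdry Q$ in the sutures, I would fix one minimizing a suitable complexity (for instance the number of intersection components), aiming to prove that such a minimal representative meets every compression body only in vertical rectangles.

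The core of the argument is then to rule out every type of intersection component except essential spanning arcs. A closed curve of $Q \cap \calR$ that is inessential on $Q$ bounds an innermost disk on $Q$; since $\calR$ is incompressible (Definition~\ref{defn:locallythin}) and $M$ is irreducible, that disk allows an isotopy of $Q$ reducing complexity, so no such curve survives. Inessential arcs of $Q \cap \calR$ cut off outermost disks on $Q$ giving boundary compressions, which are likewise removed using incompressibility of $\calR$ and the product structure along the sutures. The subtle part is the interaction of $Q$ with $\calS$, which is only strongly irreducible rather than incompressible, so intersections with $\calS$ cannot be removed by a naive innermost-disk argument. Here I would reproduce the sweep-out and edge-slide analysis of \cite{EMMGcircularthin}: an innermost disk or outermost arc of $Q$ on $\calS$ yields a compressing or boundary-compressing disk for the strongly irreducible Heegaard surface on one side, and strong irreducibility forces this disk to be standard, hence removable, or forces an admissible isotopy of the splitting.

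The place where the slope hypothesis genuinely enters — and where the present statement extends the summing-annulus case — is in disposing of intersection components that are \emph{essential} on $Q$. A closed intersection curve essential on $Q$ is parallel on $Q$ to each component of $\bdry Q$, and after the preceding reductions the subannulus of $Q$ it cobounds with a boundary component would realize an essential parallelism relating a curve of $\bdry Q$ to a curve carried by $\calR$; comparing slopes on the toroidal suture then contradicts the assumption that no curve of $\bdry Q$ is isotopic in $\bdry M$ to a curve of $\bdry \calR$. The same slope comparison excludes essential arcs of $Q \cap \calR$ that would be boundary-parallel into the sutures. In the original proof this obstruction was supplied by the fact that the summing annulus has meridional boundary while the thin surface, built from Seifert surfaces, has longitudinal boundary; the hypothesis here is exactly the general condition that replaces that meridian-versus-longitude distinction.

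Once all other components are eliminated, $Q \cap (\calR \cup \calS)$ consists of essential spanning arcs cutting $Q$ into rectangles, each lying in a single compression body with two opposite sides on $\bdry Q$ (in the vertical boundary) and the remaining two on $\calR \cup \calS$; these are the asserted vertical rectangles. Finally I would verify that chopping preserves the structure: a compression body cut along a vertical rectangle is again a (possibly disconnected) compression body with the induced sutured structure, $\calR \cut Q$ stays incompressible with boundary in the sutures of $M \cut Q$, and the Heegaard surfaces remain strongly irreducible on each piece, so that $(M \cut Q, \calR \cut Q, \calS \cut Q)$ is a circular generalized Heegaard splitting. The main obstacle is the $\calS$-interaction step, since faithfully reproducing the strong-irreducibility argument of \cite{EMMGcircularthin} is what carries the real technical weight; the only new input beyond that reference is confirming that the slope hypothesis provides precisely the obstruction to essential intersection components that the summing-annulus geometry previously provided.
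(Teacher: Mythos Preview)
Your proposal is correct and matches the paper's approach exactly: the paper gives no proof at all beyond the remark preceding the statement that ``the proof of Theorem~\ref{thm:EMMGsplit} given in \cite{EMMGcircularthin} extends directly'' once the slope hypothesis replaces the meridian-versus-longitude distinction, and your sketch is precisely an unpacking of that claim. One small overreach: in your final paragraph you assert that the Heegaard surfaces ``remain strongly irreducible on each piece'' after cutting, but the theorem only claims that $(M\cut Q, \calR\cut Q, \calS\cut Q)$ is a circular generalized Heegaard splitting, not that it is locally thin---and indeed local thinness need not survive the cut, nor is it used downstream (the proof of Theorem~\ref{thm:cable} immediately amalgamates the pieces anyway).
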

Note that, though we do not need it here, we have stated this theorem to allow for $Q$ to be non-separating and even for $\bdry Q$ to be contained in different components of $\bdry M$.

\begin{proof}[of Theorem~\ref{thm:cable}]
	Recall $p,q$ are coprime integers with $p>0$.  We aim to show that $h(K_{p,q})=h(K)$ for a knot $K$. By Goda (Proposition ~\ref{prop:MNhandle}), the corresponding statement for Morse-Novikov number will hold too.
	
	First off, if $p=1$ then $K_{p,q}=K$, and so the result is trivial.  Hence we assume $p\geq 2$.
	
	Since the pattern for a $(p,q)$--cable is a fibered pattern  by Lemma~\ref{lem:fiberedcable}, it follows from Lemmas~\ref{lem:fiberedpattern} and \ref{lem:satellite} that $h(K_{p,q}) \leq h(K)$.  Hence we must show $h(K_{p,q}) \geq h(K)$.

	Let $M=S^3 \cut \nbhd(K_{p,q})$ be the exterior  of the cabled knot $K_{p,q}$, let $M_K = S^3 \cut \nbhd(K)$, and let $M_V$ be a solid torus (the exterior of the unknot).  
	The cabling annulus $Q$ is a properly embedded annulus that chops $M$ into $M_K$ and $M_V$; $M\cut Q = M_K \sqcup M_V$.
	The proof now follows similarly to that of Theorem~\ref{thm:main}.  
	
	Let $(M,R,S)$ be a circular Heegaard splitting for the exterior of $K_{p,q}$ such that $h(K_{p,q}) = h(M,R,S)$.  By iterated maximal weak reductions we may obtain a locally thin generalized Heegaard splitting $(M,\calR, \calS)$ realizing this handle number as guaranteed by Lemma~\ref{lem:iteratedweakreduction2locallythin}.  Then 
	\[ h(K_{p,q}) = h(M,R,S)= j(M,R,S) = j(M,\calR, \calS).\]

	Since $p\geq 2$, the  boundary components of the cabling annulus $Q$ have slope distinct from the boundary slope of a Seifert surface\footnote{Indeed, in standard meridian-longitude coordinates, the components of $\bdry Q$ have slope $pq$.}. 
	We may now apply Theorem~\ref{thm:annulus} to obtain the generalized Heegaard splitting $(M\cut Q, \calR \cut Q, \calS \cut Q)$.  As in the proof of Theorem~\ref{thm:main}, it follows that 
	\[j(M, \calR, \calS) = j(M \cut Q, \calR \cut Q, \calS \cut Q).\]
	Because $M\cut Q = M_K \sqcup M_V$, we may divide  the generalized Heegaard splitting $(M\cut Q, \calR \cut Q, \calS \cut Q)$ into generalized Heegaard splittings $(M_K, \calR_K, \calS_K)$ and $(M_V, \calR_V, \calS_V)$.  
	Hence 
	\[j(M, \calR, \calS) = j(M_K, \calR_K, \calS_K) + j(M_V, \calR_V, \calS_V).\]
	For each $i=K,V$, a sequence of amalgamations along closed components of $\calR_i$ brings $(M_i, \calR_i, \calS_i)$ to a circular  Heegaard splitting $(M_i, R_i, S_i)$ with no handlebodies so that
	\[ j(M_i, \calR_i, \calS_i) = j(M_i, R_i, S_i) = h(M_i,  R_i, S_i).\]
	Then $h(M_K, R_K, S_K) \geq h(K)$ and $h(M_V, R_V, S_V) \geq 0$.
	Hence
	\[h(K_{p,q}) \geq h(K) + 0 = h(K)\]
	as desired.
\end{proof}

\begin{remark}
Alternatively, at the end of the above proof, instead of chopping along $Q$ once it is positioned to intersect the compression bodies of the generalized Heegaard splitting $(M, \calR, \calS)$ in only vertical rectangles,  one may prefer to first amalgamate to a circular Heegaard splitting $(M, R', S')$  while  preserving the nice structure of how $Q$ intersects the splittings.  Nevertheless, chopping this circular Heegaard splitting along $Q$ necessarily produces two generalized Heegaard splittings, and (unless $p=1$) amalgamations would still be needed to obtain circular Heegaard splittings.   

Indeed, when a Seifert surface for $K_{p,q}$ (such as an incompressible Seifert surface) may be isotoped to intersect $Q$ only in spanning arcs, it does so in $|pq|$ arcs.  Then $Q$ chops the surface into $p$ Seifert surfaces for $K$ and $|q|$ Seifert surfaces for the unknot. 
\end{remark}

\begin{acknowledgements}
%\subsection{Acknowledgements}
We thank Andrei Pajitnov for introducing this problem to us during his visit to the University of Miami in Spring 2019.  We  thank  Jennifer Schultens for her correspondence and both Nikolai Saveliev and Chris Scaduto for prompting Question~\ref{ques:other} and the comparison with knot Floer homology.  We also thank Scott Taylor for conversations and valuable commentary on an earlier draft.  Finally, we thank the anonymous referee for their questions and comments that helped improve this article.%This work was partially supported by a grant from the Simons Foundation (\#523883 to Kenneth L.\ Baker).
\end{acknowledgements}

%%%%%%%%%%%%%%%%%%%%   End of main body of article

%---------------------------------------------------------------------
%   BIBLIO
%---------------------------------------------------------------------

%\newpage

%$\quad$  %For some reason this kept the "references" title from overlapping with the page heading.
\def\thispagestyle#1{}
\bibliographystyle{alpha}
\bibliography{MNnumber}

\affiliationone{
Kenneth L.\ Baker\\
Department of Mathematics, University of Miami\\
Coral Gables, FL 33146\\
 USA\\
\email{k.baker@math.miami.edu}
}
\end{document}